\setlist{itemsep=3pt}
\newtheorem{prop}{Proposition}
\newtheorem{theo}[prop]{Theorem}
\newtheorem{lemm}[prop]{Lemma}
\newtheorem*{claim}{Claim}
\theoremstyle{definition}
\newtheorem{defi}[prop]{Definition}
\newtheorem{quest}[prop]{Question}
\newtheorem{rema}[prop]{Remark}
\numberwithin{prop}{section}
\theoremstyle{plain}
\newtheorem{taggedasmp}{Assumption}
\newcommand{\setasmptag}[1]{
	\renewcommand{\thetaggedasmp}{#1}
	}
\newcommand{\CC}{\mathbb{C}}
\newcommand{\HH}{\mathbb{H}}
\newcommand{\RR}{\mathbb{R}}
\newcommand{\ZZ}{\mathbb{Z}}
\renewcommand{\cH}{\mathcal H}
\newcommand{\cS}{\mathcal S}
\DeclareMathOperator{\supp}{supp}
\DeclareMathOperator{\Ric}{Ric}
\DeclareMathOperator{\BiRic}{BiRic}
\DeclareMathOperator{\inte}{int}
\DeclareMathOperator{\Tor}{Tor}
\DeclareMathOperator{\diam}{diam}
\DeclareMathOperator{\dist}{dist}
\newcommand{\eps}{\varepsilon}
\title[Topology of stable minimal hypersurfaces]{On the topology of stable minimal hypersurfaces in a homeomorphic $S^4$}
\begin{document}

\author{Chao Li}
\address{Courant Institute, New York University, 251 Mercer St, New York, NY 10012, USA}
\email{chaoli@nyu.edu}

\author{Boyu Zhang}
\address{Department of Mathematics, The University of Maryland at College Park, Maryland, 20742, USA}
\email{bzh@umd.edu}

\maketitle

\begin{abstract}
	We construct stable minimal hypersurfaces with simple topology in certain compact $4$-manifolds $X$ with boundary, where $X$ embeds into a smooth manifold homeomorphic to $S^4$. For example, if $X$ is equipped with a Riemannian metric $g$ with positive scalar curvature, we prove the existence of a stable minimal hypersurface $M$ that is diffeomorphic to either $S^3$ or a connected sum of $S^2\times S^1$'s, ruling out spherical space forms in its prime decomposition. These results imply new theorems on the topology of black holes in four dimensions. The proof involves techniques from geometric measure theory and $4$-manifold topology.
\end{abstract}

\section{Introduction}\label{section.introduction}

Given a compact Riemannian manifold $X$ with nontrivial topology,  a fundamental question is to construct closed minimal surfaces \textit{with controlled topology} in $X$. The classical result by Sacks-Uhlenbeck \cite{SacksUhlenbeck} produces branched minimally immersed $S^2$. When $X$ is three dimensional, deep results by Meeks-Yau \cite{MeeksYau} and Meeks-Simon-Yau \cite{MeeksSimonYau} enable us to minimize area in homotopy and isotopy classes, respectively. In particular, one obtains area minimizing surfaces with controlled topology. These results have profound applications in geometry and topology.

It is generally impossible to control the topology of a $k$-dimensional minimal submanifold in $(X^{n+1},g)$ when $k>2$. Indeed, when $k>2$, White \cite{WhiteMapping,WhiteHomotopy} proved that the least area mapping in a given homotopy class generally does not give a smooth immersion. The Federer-Fleming compactness theorem constructs area-minimizing currents in an integral homology class. These geometric objects enjoy much better regularity properties: for example, when $k=n\le 6$, an area-minimizing hypersurface $M^{n}$ in $(X^{n+1},g)$ is smooth. On the other hand, just knowing that $M$ is in a given homology class does not place much restrictions on its topological type. 

The primary scope of this paper is to construct stable (or locally minimizing) minimal hypersurfaces with controlled topology in certain $4$-manifolds under natural curvature conditions. Let $(X^4,g)$ be a smooth orientable $4$-manifold. To motivate the discussion, recall that positive Ricci curvature of $g$ implies the nonexistence of two-sided stable minimal hypersurfaces. On the other hand, if $g$ has positive scalar curvature (abbreviated as PSC in the sequel), then the Schoen-Yau descent argument implies that a two-sided stable minimal $M^3$ is Yamabe positive. Therefore, by Schoen-Yau \cite{SY79}, Gromov-Lawson \cite{GromovLawson80} and Perelman, $M^3$ is diffeomorphic to a connected sum of spherical space forms and $S^2\times S^1$'s. Conversely, any such $M^3$ may arise as a stable (or locally area-minimizing) hypersurface in certain PSC $(X^4,g)$.

In our first result, we prove that one obtains significantly better topology control of stable minimal hypersurfaces in a PSC $4$-manifold $(X^4,g)$, provided that $X^4$ is itself topologically simple. We introduce our important topological assumption on $X$:

\setasmptag{($\ast$)}
\begin{taggedasmp}
	\label{topo.assumption.on.domain}
	$X$ is a connected compact manifold with boundary, and there exists a smooth embedding $\iota:X \to S^4$ such that $S^4\setminus \iota (X)$  has at least two connected components and at least one of them is simply connected.
\end{taggedasmp}

\begin{rema}
In fact, if we replace $S^4$ by any smooth manifold that is homeomorphic to $S^4$, the results in this paper still hold without any change. Since there is no known example of a smooth manifold that is homeomorphic but not diffeomorphic to $S^4$, we only state Assumption \ref{topo.assumption.on.domain} with $S^4$ to simplify notation. If needed, one may replace $S^4$ in Assumption \ref{topo.assumption.on.domain} with any smooth manifold that is homeomorphic to $S^4$. 
\end{rema}


Since $S^4$ is simply connected, each connected component of $S^4\setminus \iota(X)$ is bounded by exactly one component of $\partial (\iota(X))$. Thus, any such $X$ has at least two boundary components. Also, $X$ is necessarily orientable. Examples of such $X$ include the complement on $S^4$ of finitely many  open domains $D_1,\cdots,D_n$ whose closures are disjoint and at least one of $\{D_j\}$ is simply connected. In particular, $S^3\times [0,1]$ and $\left(\#^l S^2\times S^1\right)\times [0,1]$ satisfy \ref{topo.assumption.on.domain}.

\begin{theo}\label{theo.main.psc}
	Suppose $(X^4,g)$ is a smooth Riemannian manifold satisfying \ref{topo.assumption.on.domain}, such that $g$ has positive scalar curvature and $\partial X$ is $g$-weakly mean convex. Then $(X^4,g)$ contains a two-sided embedded stable minimal hypersurface that is diffeomorphic to $S^3$ or a connected sum of $S^2\times S^1$'s.
\end{theo}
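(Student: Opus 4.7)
I will construct $M$ by area minimization in a suitable relative homology class of $X$, use the Schoen--Yau positive scalar curvature descent to classify $M$ up to a connected sum of spherical space forms and $S^2\times S^1$'s, and then invoke assumption \ref{topo.assumption.on.domain} to eliminate the spherical space form summands by a $4$-manifold topology argument.

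By \ref{topo.assumption.on.domain} there is a boundary component $\Sigma_0\subset\partial X$ whose corresponding complementary region $U_1\subset S^4\setminus \iota(X)$ is simply connected, and at least one other boundary component. I minimize mass among integral $3$-currents in the nontrivial relative class $[\Sigma_0]\in H_3(X,\partial X\setminus \Sigma_0;\ZZ)$. Federer--Fleming yields a minimizer; weak mean convexity of $\partial X$ acts as a barrier so that the support may be taken inside $X^\circ$. Standard codimension-one regularity in ambient dimension four then produces a smooth, two-sided, embedded, stable minimal hypersurface $M$. Because $M$ is two-sided stable minimal in a PSC $4$-manifold, the Schoen--Yau conformal change argument gives a metric of positive scalar curvature on $M$, and Perelman's geometrization yields
\[
M\cong \left(\#_{i=1}^{p} S^3/\Gamma_i\right)\#\left(\#_{j=1}^{q} S^2\times S^1\right)
\]
for finite groups $\Gamma_i$ acting freely on $S^3$.

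The core of the argument is the topological elimination of the $S^3/\Gamma_i$ factors. Since $M$ is homologous to $\Sigma_0$ in $X$, there is a region $W\subset X$ with $\partial W=M-\Sigma_0$, and glueing $W$ to $\overline{U_1}$ along $\Sigma_0$ yields a compact smooth $4$-submanifold $V:=W\cup_{\Sigma_0}\overline{U_1}\subset S^4$ with $\partial V=M$. If $V$ can be arranged to satisfy $H_1(V)=0$, then the kernel of $T(H_1(M))\to H_1(V)$ is all of $T(H_1(M))$; by the half-lives-half-dies principle this kernel is isotropic under the non-degenerate linking form on $T(H_1(M))$, forcing $T(H_1(M))=0$. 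Consequently each $\Gamma_i^{\mathrm{ab}}=0$, so each $\Gamma_i$ is perfect. Among finite groups acting freely on $S^3$ the only non-trivial perfect example is the binary icosahedral group $2I$; a Rokhlin-type argument using the smooth embedding $V\subset S^4$ and the spin structure on $S^4$ then rules out the Poincar\'e-sphere summands, giving $M\cong S^3$ or $\#_k S^2\times S^1$.

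The main obstacle is securing $H_1(V)=0$. Van Kampen gives $\pi_1(V)=\pi_1(W)/\langle\langle\mathrm{image}(\pi_1(\Sigma_0))\rangle\rangle$, which a naive area minimizer need not kill. I anticipate that overcoming this requires either a $\mu$-bubble minimization with a prescribing function adapted to the simply connected complement $U_1$, an ambient surgery on $W$ along disks whose existence is guaranteed by $\pi_1(U_1)=0$, or a min-max scheme with a topologically chosen sweep-out, so as to kill the $\pi_1$-generators of $W$ not already coming from $\Sigma_0$. The subsequent elimination of the Poincar\'e summands is a secondary but still non-trivial smooth $4$-manifold topology step tied to the smoothness of the embedding $X\subset S^4$.
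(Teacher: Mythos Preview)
Your overall architecture matches the paper's: minimize to get a stable minimal $M$, use Schoen--Yau descent to write $M$ as $(\#_i S^3/\Gamma_i)\#(\#^k S^2\times S^1)$, build a region $V\subset S^4$ with $\partial V=M$ and $\pi_1(V)=1$, deduce $\Tor H_1(M)=0$, and then kill possible Poincar\'e-sphere summands. You have also correctly isolated the crux: forcing $\pi_1(V)=1$. However, two steps are genuinely incomplete.

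\textbf{The $\pi_1$-surjectivity step.} None of your proposed mechanisms ($\mu$-bubbles, ambient surgery on $W$, min--max) is what works here, and it is not clear any of them can be made to work. The paper does \emph{not} take $M$ to be an area minimizer. Instead, it considers the (nonempty, $C^\infty$-compact by Schoen--Simon curvature estimates) family $\cS$ of \emph{all} stable minimal hypersurfaces homologous to $Y=\Sigma_0$ with area $\le \cH^3(Y)$, and picks $M\in\cS$ minimizing the enclosed volume $\cH^4(\Omega_M)$. If $i_*:\pi_1(Y)\to\pi_1(\Omega_M)$ were not onto, one passes to the connected cover $\tilde\Omega_M\to\Omega_M$ with $\pi_*\pi_1(\tilde\Omega_M)=i_*\pi_1(Y)$, so that $\pi^{-1}(Y)$ has more than one component, and minimizes area in the homology class of a single lift $Y_0$. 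Projecting the resulting minimizer back down produces a competitor bounding strictly less volume than $\Omega_M$, a contradiction. The substantive difficulty, which your proposal does not address, is that $\tilde\Omega_M$ is typically non-compact and \emph{not} a normal cover, so existence of a compact minimizer in $[Y_0]$ is not automatic; the paper proves a lemma (every nonzero class in $H_3(\tilde\Omega_M)$ has nonzero intersection with some properly embedded compact arc) that lets one extract a compact limit component by component. This covering-space minimization, not a $\mu$-bubble or min--max, is the missing idea.

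\textbf{Eliminating Poincar\'e-sphere summands.} A ``Rokhlin-type'' argument is not enough: the Rokhlin invariant is only $\ZZ/2$-valued and gives at best a parity constraint on the number of $P$ summands. The paper first attaches $3$-handles to $A$ along the $\{pt\}\times S^2$'s in each $S^2\times S^1$ factor to produce a \emph{contractible} smooth $4$-manifold $\hat A$ with $\partial\hat A=M_1:=\#_i S^3/\Gamma_i$. Since $\Tor H_1(M_1)=0$ one has $M_1=aP\#b(-P)$. Contractibility of $\hat A$ means $[M_1]=0$ in the homology cobordism group $\Theta^3_\ZZ$; applying a $\ZZ$-valued homomorphism on $\Theta^3_\ZZ$ nonvanishing on $P$ (Fr{\o}yshov, $d$, or Manolescu's $\beta$) forces $a=b$. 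Finally, Taubes' gauge-theoretic theorem that $aP\#a(-P)$ bounds no smooth contractible $4$-manifold for $a>0$ gives $a=0$. Rokhlin and the spin structure on $S^4$ alone do not yield either of these two conclusions.
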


In other words, Theorem \ref{theo.main.psc} rules out all nontrivial spherical components in the prime decomposition of $M$. This was unknown even in the simplest case when $X = S^3\times [0,1]$. We will see in Section \ref{section.example} that there exists $(X^4,g)$ satisfying the assumptions of Theorem \ref{theo.main.psc} where the only stable minimal hypersurfaces are diffeomorphic to $S^2\times S^1$. We note here that the assumption that $\partial X$ has at least two connected components is only used to guarantee that each any such component $Y$ is homologically nontrivial, and this assumption may be removed if we know the existence of a stable minimal hypersurface in the class $[Y]$ for some other reasons.

In four dimensions, there is an interesting notion of curvature  -- the bi-Ricci curvature -- that interpolates between the Ricci and the scalar curvature. 

\begin{defi}\label{defi.biricci}
	Given a Riemannian manifold $(X^n,g)$, $p\in X$ and orthonormal $e_1, e_2\in T_p X$, the bi-Ricci curvature $p$ of $\{e_1,e_2\}$ is defined as
	\[\BiRic(e_1,e_2): = \Ric(e_1) + \Ric(e_2) - \sec(e_1,e_2).\]
\end{defi}

Equivalently, $\BiRic_g (e_1,e_2) = R_g - \sec_g(e_3,e_4)$ for an orthonormal basis $\{e_1,\cdots,e_4\}$. Here $R_g$ denotes the scalar curvature of $g$. Observe that positive bi-Ricci curvature implies PSC. Manifolds with positive bi-Ricci curvature were first considered by Shen-Ye \cite{ShenYe}. Bi-Ricci curvature and its generalizations have been studied systematically by Brendle-Hirsch-Johne \cite{BrendleHirschJohne} recently, and have deep applications in the stable Bernstein problem for minimal hypersurfaces \cite{CLMS24bernstein,Mazet24bernstein}. In \cite{ShenYe}, it is proved that a two-sided stable minimal hypersurface $M^3$ in $(X^4,g)$ with positive bi-Ricci curvature has positive Ricci curvature in the spectral sense, and hence (by the resolved Poincar\'e conjecture) is diffeomorphic to a spherical space form. In our second result, we prove the existence of such $M$ that is actually diffeomorphic to $S^3$, provided that $X^4$ satisfies \ref{topo.assumption.on.domain}.

\begin{theo}\label{theo.main.biricci}
	Suppose $(X^4,g)$ satisfies all assumptions in Theorem \ref{theo.main.psc}, and additionally $\BiRic_g>0$. Then $(X^4,g)$ contains a two-sided embedded stable minimal hypersurface that is diffeomorphic to $S^3$.
\end{theo}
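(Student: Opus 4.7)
The strategy is to combine Theorem~\ref{theo.main.psc} with the rigidity result of Shen-Ye \cite{ShenYe} that was recalled in the paragraph preceding the statement. First I would note that $\BiRic_g > 0$ implies $R_g > 0$, so every hypothesis of Theorem~\ref{theo.main.psc} is in force. Applying that theorem yields a two-sided embedded stable minimal hypersurface $M^3 \subset (X^4,g)$ which is diffeomorphic either to $S^3$ or to a connected sum $\#^k (S^2 \times S^1)$ for some $k \ge 1$.

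Next, I would apply Shen-Ye to this very $M^3$: under the pointwise assumption $\BiRic_g > 0$, any two-sided stable minimal hypersurface carries positive Ricci curvature in the spectral sense; by Perelman's resolution of the Poincar\'e and Geometrization conjectures, such an $M^3$ is diffeomorphic to a spherical space form $S^3/\Gamma$ with $\Gamma$ a finite subgroup of $SO(4)$ acting freely on $S^3$.

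The last step is purely topological: I intersect the two lists of possibilities. For $k \ge 1$, by Van Kampen the connected sum $\#^k (S^2 \times S^1)$ has fundamental group isomorphic to the free group $F_k$ of rank $k$, which is infinite and nonabelian when $k \ge 2$ and infinite cyclic when $k=1$; in no case does it embed as a finite subgroup of $SO(4)$. Hence the case $k \ge 1$ is excluded, leaving only $M^3 \cong S^3$, which proves the theorem.

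There is no genuine obstacle here beyond the content of Theorem~\ref{theo.main.psc} itself: the Shen-Ye rigidity is already quoted verbatim in the discussion above Theorem~\ref{theo.main.biricci}, and once the existence statement of Theorem~\ref{theo.main.psc} is available, intersecting its topological list with ``spherical space form'' immediately picks out $S^3$. So the proof I envision is essentially a one-paragraph combination of the two ingredients.
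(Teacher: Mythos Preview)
Your argument is correct. The paper's own proof takes a slightly less modular path: rather than citing Theorem~\ref{theo.main.psc} as a black box, it returns to Proposition~\ref{prop.topology.extremal.minimal} to produce the extremal hypersurface $M$, applies Shen--Ye to conclude $M\cong S^3/\Gamma$, then uses Lemma~\ref{lem_Sigma_homology} (torsion-free $H_1$) to reduce to $S^3$ or the Poincar\'e sphere, and finally invokes Proposition~\ref{prop_topology_of_cut} to eliminate the latter. Your route is cleaner as a corollary, since Theorem~\ref{theo.main.psc} has already absorbed the hard step of ruling out the Poincar\'e sphere; intersecting its output list with ``finite fundamental group'' finishes immediately. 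The underlying ingredients are the same.
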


We expect such  existence results to be applicable for further investigations on $4$-manifolds.

\subsection{Strategy of proof}

In \cite{White1992topology}, White illustrated how to obtain some mild controls on the topology of solutions to the Plateau problem in a ball. His basic observation is that stable minimal hypersurfaces at extremal positions tend to bound  topologically simple regions. We adopt this general principle in our construction. Given $(X^4,g)$ satisfying \ref{topo.assumption.on.domain}, let $D$ be a simply connected component of $S^4\setminus \iota(X)$, and $Y$ be the boundary component of $X$ such that $\iota(Y) = \partial D$. The key is to consider, among stable minimal hypersurfaces homologous to $Y$, the one $M$ that is the \textit{nearest} to $Y$. Through a novel minimization argument in general covering spaces (that are possibly neither compact or normal), we establish some conditions on the fundamental group of the region between $Y$ and $M$ (see Proposition \ref{prop.topology.extremal.minimal}). This part of the proof is very general (e.g. does not depend on the dimension or the topological assumptions of $X$), and we expect it to be useful for other problems.

The second part of the argument relies heavily on techniques in $4$-manifold topology. Since $X$ satisfies \ref{topo.assumption.on.domain}, Proposition \ref{prop.topology.extremal.minimal} implies that $M$ admits a locally flat embedding into $S^4$ that bounds a simply connected region. The embedding problem of three-manifolds into $S^4$ has been extensively investigated, and a collection of classical results may be found in a recent survey by Hillman \cite{hillman2024locally}. These arguments are extremely well suited to study the case when $M = (\#S^3/\Gamma_i)\#(\#^l (S^2\times S^1))$, which holds in this setting because of the Yamabe positivity property. 
It follows from classical arguments that the only possible nontrivial spherical prime factor in $M$ is the Poincar\'e homology sphere. We then use Floer-theoretic invariants and a deep result of Taubes \cite{taubes1987gauge} to rule it out.

\subsection{A theorem on the topology of black holes}

Consider an asymptotically flat manifold $(X^n,g)$ with nonnegative scalar curvature, and let $E$ be an end of $X$. The \textit{outermost apparent horizon} $M$ of $E$, if non-empty, is defined as the outermost minimal hypersurface in $E$. Hawking's classical black hole topology theorem \cite{HawkingEllis} states that when $n=3$, $M$ is a disjoint union of $S^2$. Generalizations in higher dimensions due to Cai, Galloway and Schoen \cite{CaiGalloway2001,GallowaySchoen2006,Galloway2008} conclude that if $M$ is smooth, then it is a stable minimal hypersurface, and is Yamabe positive. 

Our proof of Theorem \ref{theo.main.psc} reveals the topology of stable minimal hypersurfaces which is outermost with respect to a mean convex barrier. Thus it directly applies to studying the topology of apparent horizons in asymptotically flat four-manifolds, provided that $X^4$ satisfies \ref{topo.assumption.on.domain}.

\begin{theo}\label{theo.gr}
	Suppose $(X^4,g)$ is a smooth asymptotically flat manifold with nonnegative scalar curvature, and $X$ is diffeomorphic to the interior of a manifold that satisfies \ref{topo.assumption.on.domain}. Then the outermost apparent horizon of each asymptotically flat end of $X$ is diffeomorphic to a disjoint union of $S^3$ or a connected sum of $S^2\times S^1$'s.
\end{theo}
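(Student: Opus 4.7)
The theorem reduces to Theorem \ref{theo.main.psc} by truncating the asymptotically flat ends and identifying the outermost apparent horizon with the extremal stable minimal hypersurface produced in its proof. Fix an asymptotically flat end $E$ of $X$ and let $M$ denote its outermost apparent horizon; the conclusion is vacuous if $M = \emptyset$, so assume $M$ is nonempty. Using the asymptotically flat coordinates on $E$, pick $R \gg 0$ so that the coordinate sphere $\Sigma_R = \{|x| = R\}$ lies outside $M$ and is strictly mean convex with outward mean curvature approximately $3/R$. Truncating each asymptotically flat end of $X$ in this way yields a compact manifold with boundary $\bar X_R$. By hypothesis $X$ is diffeomorphic to the interior of some $\hat X$ satisfying \ref{topo.assumption.on.domain}, so for $R$ sufficiently large $\bar X_R$ is diffeomorphic to $\hat X$ and hence itself satisfies \ref{topo.assumption.on.domain}; under the embedding supplied by that assumption, the boundary component $\Sigma_R$ bounds a $4$-ball in $S^4$, providing the required simply connected complementary region.

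Next I would upgrade from nonnegative to strictly positive scalar curvature. A standard conformal perturbation $\tilde g = u^2 g$, with $u = 1 + \eta$ and $\eta$ a small positive solution to a linear elliptic equation on $\bar X_R$ with boundary data preserving strict mean convexity, yields a metric $\tilde g$ with $R_{\tilde g} > 0$ that is arbitrarily $C^\infty$-close to $g$. Because the outermost apparent horizon varies continuously with the metric, $M$ persists as the outermost stable minimal hypersurface $\tilde M$ of $(\bar X_R, \tilde g)$ homologous to $\Sigma_R$. Alternatively, since $M$ is automatically Yamabe positive by the Galloway-Schoen strengthening of Hawking's theorem, one may bypass the conformal change entirely and apply the arguments behind Theorem \ref{theo.main.psc} directly to $M$, since Yamabe positivity is the only consequence of PSC invoked in the extremal argument.

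Now I would apply Theorem \ref{theo.main.psc} to $(\bar X_R, \tilde g)$. As indicated in the introduction, the proof of that theorem uses Proposition \ref{prop.topology.extremal.minimal} to construct a stable minimal hypersurface homologous to the distinguished boundary component (here $\Sigma_R$) and \emph{nearest} to it. This is precisely the characterization of the outermost apparent horizon, so $\tilde M$ coincides with the hypersurface produced by Theorem \ref{theo.main.psc}. Invoking its topological conclusion, together with the four-manifold-topology step (locally flat embedding of $\tilde M$ into $S^4$ bounding a simply connected region) and the Floer-theoretic step that rules out Poincaré-type factors, we conclude that each component of $\tilde M$, and hence of $M$, is diffeomorphic either to $S^3$ or to a connected sum of $S^2 \times S^1$'s.

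\emph{Main obstacle.} The principal subtlety is the PSC reduction: one must execute the conformal deformation in a way that is controllably small enough to preserve the outermost character of $M$, so that the topological conclusion obtained for $\tilde M$ faithfully describes the original horizon. The cleanest resolution is via Galloway-Schoen Yamabe positivity, which lets the extremal argument of Proposition \ref{prop.topology.extremal.minimal} act directly on the horizon without altering the metric. A secondary issue is handling multiple components of $M$ (multi-black-hole configurations), but since the homology class $[\Sigma_R]$ is preserved, the extremal minimization argument extends component by component and the topological conclusion applies to each.
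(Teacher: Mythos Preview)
Your alternative route via Galloway--Schoen is the correct one and is exactly what the paper does; the conformal perturbation detour should be dropped. Outermost apparent horizons do \emph{not} in general vary continuously with the metric (they can jump), so there is no clean way to transfer the topological conclusion from a perturbed $\tilde M$ back to $M$, and your claim that ``the outermost apparent horizon varies continuously with the metric'' is false as stated. Once you invoke Galloway--Schoen to obtain Yamabe positivity of $M$ directly, no ambient PSC is needed: the only role of PSC in the proof of Theorem~\ref{theo.main.psc} is to force the prime decomposition $M=(\#_i S^3/\Gamma_i)\#(\#^k S^2\times S^1)$, and that is now supplied independently.

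With that correction, your argument coincides with the paper's. The paper is slightly more direct: rather than truncating all ends to form $\bar X_R$ and then invoking Theorem~\ref{theo.main.psc} as a black box, it simply takes $X'$ to be the region between $M$ and a single large coordinate sphere $Y$ in the chosen end, observes that outermostness of $M$ means $X'$ contains no interior stable minimal hypersurface homologous to $Y$, and hence that $M$ is automatically the extremal element in Proposition~\ref{prop.topology.extremal.minimal}. This yields surjectivity of $\pi_1(Y)\to\pi_1(X')$; since $\iota(Y)\cong S^3$ bounds a topological $D^4$ in $S^4$ by the locally flat Schoenflies theorem, the arguments of Section~\ref{section.space.forms} then apply verbatim to each component of $M$. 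Your identification ``outermost horizon $=$ nearest stable minimal hypersurface to $Y$'' is precisely this observation.
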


In fact, our proof only requires that the trapped region (the unbounded component separated by an apparent horizon on an asymptotically flat end) is a subset of $\RR^4$, for which the topological assumption \ref{topo.assumption.on.domain} is a natural sufficient condition. In particular, if $X^4$ is diffeomorphic to the complement of finitely many compact sets on $S^4$, its apparent horizon of each asymptotically flat end is $S^3$ or connected sums of $S^2\times S^1$. We expect this conclusion to be sharp: Dahl-Larsson \cite{DahlLarsson2019} constructed examples of apparent horizons on a subset of some asymptotically flat $\RR^4$ that are diffeomorphic to the disjoint union of $l$ copies of $S^2\times S^1$ for each $l\ge 1$. We note here that Theorem \ref{theo.gr} also applies to other ALF or ALG four dimensional gravitational instantons with nonnegative scalar curvature, provided that $X$ is diffeomorphic to the interior of a manifold that satisfies \ref{topo.assumption.on.domain}. These include certain manifolds admitting a fibered end with fibers diffeomorphic to $S^2\times S^1$. On the other hand, the topology of apparent horizons may be more complicated (e.g. lens spaces $L(p,q)$) if $X$ fails to satisfy \ref{topo.assumption.on.domain}, as illustrated by the beautiful black lenses examples constructed by Khuri-Rainone \cite{KhuriRainone2023}. Let us remark that all these examples are simply connected $4$-manifolds, which (after suitable compactifications) are homeomorphic to connected sums of $S^2\times S^2$, $\CC P^2$ and $-\CC P^2$. It would be interesting to investigate whether other spherical space forms (e.g. the Poincar\'e homology sphere) may appear as the outermost apparent horizon for a four dimensional gravitational instanton with nonnegative scalar curvature.

\subsection{Organization of the paper}
The paper is organized as follows. In Section \ref{section.minimal.hypersurface}, we establish general topological constraints for minimal hypersurfaces at extremal positions. The key result is Proposition \ref{prop.topology.extremal.minimal}, which relies on a very general existence result for homologically area-minimizing hypersurfaces in any covering space of a compact manifold with boundary. In Section \ref{section.space.forms}, we focus on four-dimensional manifolds and use topological techniques to rule out nontrivial spherical space forms in the prime decomposition of the stable minimal hypersurface, finishing the proof of Theorem \ref{theo.main.psc} and Theorem \ref{theo.main.biricci}. Section \ref{section.black.hole} is devoted to the black hole topology theorem. Finally, in Section \ref{section.example}, we discuss an extension of our results for solutions to the Plateau problem in a homeomorphic $D^4$, examples of PSC embeddings  of connected sums of lens spaces into $S^4$, and the Dahl-Larsson example of $S^2\times S^1$ apparent horizons. Some natural questions are also discussed and posed.

\subsection*{Acknowledgement} The authors are grateful to Claude LeBrun, Christos Mantoulidis, Rick Schoen and Brian White for stimulating conversations on various topics in this paper. We thank Daniel Ruberman for explaining the Zeeman construction to us, and Marcus Khuri for discussions on black hole topology and for patiently answering our questions on \cite{KhuriRainone2023}. C.L. is supported by an NSF grant (DMS-2202343), a Simons Junior Faculty Fellowship and a Sloan Fellowship. B.Z. is supported by an NSF grant (DMS-2405271) and a travel grant from the Simons Foundation.

\section{Stable minimal hypersurfaces of extremal positions}\label{section.minimal.hypersurface}

In this section, we obtain some preliminary topological constraints on stable minimal hypersurfaces in a compact Riemannian manifold $(X^{n+1},g)$ with weakly mean convex boundary. Assume $n\le 6$ for the regularity of area-minimizing hypersurfaces \footnote{The conclusions of this section should hold in higher dimensions as well, thanks to Simon's maximum principle \cite{Simon1987maximum}. We do not pursue this direction.}. Let $Y$ be a connected component of $\partial X$. Suppose that
\[I:=\inf\{\cH^n(M): M^n \text{ is a hypersurface in }X \text{ homologous to }Y \}>0.\]
This holds when $[Y]\in H^n(X, \ZZ)$ is nonzero, for example when $\partial X$ has at least two connected components. Since $\partial X$ is weakly mean convex, $I$ can be realized by a (possibly disconnected) smooth stable minimal hypersurface $M^n\subset X^{n+1}$. Consider 
\begin{multline*}
	\cS = \{M^n\subset X^{n+1}: M \text{ is an embedded stable minimal hypersurface} \\
	\text{ homologous to }Y, \cH^n(M)\le  \cH^n(Y) \}.
\end{multline*}

By standard curvature estimates for stable minimal hypersurfaces \cite{SSY,SchoenSimon}, $\cS$ is compact in the $C^\infty$ topology. For each $M\in \cS$, let $\Omega_M$ be the $(n+1)$-dimensional manifold such that $\partial \Omega_M = Y-M$ (regarded as currents).
The key result of this section is the Proposition \ref{prop.topology.extremal.minimal}.  This is a nontrivial extension of \cite[Theorem 1]{White1992topology}.

\begin{prop}\label{prop.topology.extremal.minimal}
	Given a compact oriented Riemannian manifold $(X^{n+1},g)$ as above. There exists $M\in \cS$ such that either $M=Y$, or $\Omega_M$ is connected and satisfies
	\[\cH^{n+1}(\Omega_M) = \min\{\cH^{n+1}(\Omega_{M'}): M'\in \cS \}.\]
	Moreover, for every $y\in Y$, the inclusion $i: Y\to \Omega_M$  induces a surjective mapping
	\[i_*: \pi_1(Y,y) \to \pi_1(\Omega_M,y).\]
\end{prop}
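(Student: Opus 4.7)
The plan is to first produce a volume-minimizing element $M \in \cS$ with $\Omega_M$ connected, and then extract the $\pi_1$-surjectivity from this minimality via a covering-space argument.

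First I would establish existence of the volume minimizer. By the Schoen-Simon-Yau interior curvature estimates \cite{SSY,SchoenSimon}, $\cS$ is precompact in $C^{\infty}$, and since $M\mapsto \cH^{n+1}(\Omega_M)$ is continuous, it attains its minimum at some $M\in \cS$. If this minimum is zero we are done with $M=Y$. Otherwise, connectedness of $\Omega_M$ follows from a standard component-removal argument: any component of $\Omega_M$ not containing the connected surface $Y$ would be bounded entirely by a union of components of $M$; deleting the corresponding components of $M$ yields a new element of $\cS$ (still stable, still homologous to $Y$, with no larger area) but strictly smaller $\Omega$-volume, contradicting the choice of $M$.

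Next, suppose for contradiction that $H := i_*(\pi_1(Y,y))$ is a proper subgroup of $\pi_1(\Omega_M,y)$, and let $\pi\colon \tilde\Omega \to \Omega_M$ be the connected cover corresponding to $H$. This cover is generally infinite-sheeted and non-normal. Because $H$ is precisely the image of $\pi_1(Y,y)$, the inclusion $Y \hookrightarrow \Omega_M$ lifts to an isometric embedding $\tilde Y \hookrightarrow \partial \tilde\Omega$ onto a single boundary component diffeomorphic to $Y$; the remaining boundary consists of (possibly noncompact) lifts of $M$, which inherit stability, minimality, and mean-convexity of direction. At this point I invoke the general existence result for area-minimizing hypersurfaces in arbitrary covers of mean-convex domains stated earlier in Section~\ref{section.minimal.hypersurface} to obtain a stable minimal hypersurface $\tilde M \subset \tilde\Omega$ homologous to $\tilde Y$, with $\cH^n(\tilde M) \le \cH^n(\tilde Y) = \cH^n(Y)$. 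Let $\tilde \Omega'$ denote the region with $\partial \tilde \Omega' = \tilde Y - \tilde M$ as integral currents.

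Pushing down, $M' := \pi(\tilde M)$ is an embedded stable minimal hypersurface in $X$, homologous to $Y$ (witnessed by the push-forward region $\pi_*[\tilde\Omega']$) with area at most $\cH^n(Y)$, hence $M'\in \cS$. Near $\tilde Y$, $\pi|_{\tilde\Omega'}$ is a diffeomorphism onto a one-sided collar of $Y$ in $\Omega_M$. The crux is to show strict volume loss, $\cH^{n+1}(\Omega_{M'}) < \cH^{n+1}(\Omega_M)$, contradicting minimality of $M$. My plan for this is the following dichotomy: if $\pi|_{\tilde\Omega'}$ were a homeomorphism onto $\Omega_M$, then $\tilde\Omega'$ would provide a continuous section of the connected cover $\pi$, forcing the cover to be trivial and $H=\pi_1(\Omega_M,y)$, a contradiction; otherwise $\pi|_{\tilde\Omega'}$ either fails to surject onto $\Omega_M$ or folds multiple sheets onto part of $\Omega_M$, and in either case the region $\Omega_{M'}$ cut off by $M'$ inside $\Omega_M$ has strictly smaller volume than $\Omega_M$. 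The main obstacle I anticipate is making this projection and strict-volume-loss step fully rigorous in a non-compact, non-normal cover, where one must carefully track the multiplicities of $\pi_*\tilde M$ and $\pi_*\tilde\Omega'$ as currents and verify that a genuinely different competitor $M'\in \cS$ is produced; this measure-theoretic bookkeeping is, I expect, where the novelty and bulk of the technical work lies.
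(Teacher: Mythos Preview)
Your overall architecture matches the paper's: minimize volume over $\cS$ to pick $M$, pass to the cover $\tilde\Omega$ corresponding to $i_*(\pi_1(Y))$, produce a minimizer there, project, and derive a contradiction. But there are two genuine gaps.

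First, you treat the existence of an area-minimizer in the class $[\tilde Y]\in H_n(\tilde\Omega)$ as a black box (``I invoke the general existence result \ldots\ stated earlier''). There is no such earlier result to invoke: this existence statement is precisely what the proof of Proposition~\ref{prop.topology.extremal.minimal} must establish, and it is where essentially all the work lies. The cover $\tilde\Omega$ is noncompact and non-normal, so a minimizing sequence can drift to infinity and nothing a priori forces a limit. The paper's mechanism is Lemma~\ref{lem_intersection_nonzero}: every nonzero class in $H_n(\tilde\Omega)$ has nonzero intersection number with some \emph{compact} properly embedded arc $\gamma$. One then minimizes in large mean-convex balls $B_\rho(\tilde y)$, and the Thom form of $\gamma$ pins at least one component of each minimizer $N_\rho$ to the fixed compact set $\supp\omega_\gamma$, so those components converge. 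Iterating with new arcs detecting the residual class $[Y_0]-[N^0]-\cdots$, and using the uniform area lower bound from bounded geometry to terminate, is how the compact minimizer $N$ is actually built. You have skipped this entirely.

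Second, the assertion that $M':=\pi(\tilde M)$ is \emph{embedded} is unjustified and in general false: distinct pieces of $\tilde M$ can project to intersecting hypersurfaces. The paper does not claim embeddedness; when $\hat N=\pi(N)$ is only immersed it is still weakly mean-convex (self-intersections make angles $<\pi$), so one uses $\hat N$ as a barrier and minimizes area in the region between $Y$ and $\hat N$ to manufacture a genuine $M'\in\cS$ with $\cH^{n+1}(\Omega_{M'})<\cH^{n+1}(\Omega_M)$. Your dichotomy about $\pi|_{\tilde\Omega'}$ being a homeomorphism is aimed at the wrong place: once $N$ exists, the strict volume drop is short (one checks $N\not\subset\partial\tilde\Omega$ via a mass comparison, using that $\pi^{-1}(Y)$ has at least two components), whereas you flag the projection step as the main obstacle. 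The novelty is upstream, in Lemma~\ref{lem_intersection_nonzero} and the inductive extraction of $N$.
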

Note that since $Y$ is connected, the surjectivity of $i_*$ does not depend on the choice of the base point $y$. 

The rest of this section is devoted to the proof of Proposition \ref{prop.topology.extremal.minimal}. 
Since $\cS$ is compact in $C^\infty$ convergence, there exists $M\in \cS$ such that $\cH^{n+1}(\Omega_M) = \min\{\cH^{n+1}(\Omega_{M'}):M'\in \cS \}$. Assume that $M\ne Y$. Then by the strong maximum principle, $M$ is contained in the interior of $X$. Also, observe we necessarily have that $\Omega_M$ is connected: otherwise discarding all connected components of $\Omega_M$ except for the one containing $Y$ yields an $\Omega_{M'}$ with smaller volume. 

Suppose, for the sake of contradiction, that $i_*$ is not surjective. Take $y\in Y$, take the connected covering space $\pi: (\tilde \Omega_M, \tilde y) \to (\Omega_M, y)$ such that
\[\pi_*(\pi_1(\tilde \Omega_M, \tilde y)) = i_*(\pi_1(Y,y)).\]
Denote by $\tilde g = \pi^* g$ the covering metric and $Y_0$ the connected component of $\pi^{-1}(Y)$ that contains $\tilde y$. Since $i_*$ is not surjective, $\pi^{-1}(Y)$ has more than one connected components. Our basic idea is to prove that $[Y_0]\in H_n(\tilde \Omega_M)$ is nontrivial, and that we may minimize area in $[Y_0]$ to find another area-minimizing hypersurface in $\tilde \Omega_M$, so that its projection in $\Omega_M$ gives a stable minimal hypersurface in $\Omega_M$ that bounds a region with strictly smaller volume, contradicting the choice of $M$.

However, a key difficulty is that generally, a minimizing sequence in a complete noncompact manifold does not necessarily converge. Note that we cannot assume that the covering space $\tilde{\Omega}_M$ is normal, and we cannot expect our area-minimizing hypersurface to be connected. To proceed, we establish some topological properties of the covering space of a compact manifold with boundary, and we carefully carry out the construction of an area-minimizing hypersurface. This argument seems novel and we expect it to be useful for other applications.

We start by introducing some notations. 
If $X$ is a manifold with boundary, let $\inte(X)$ denote the interior of $X$.

\begin{defi}\label{defi.intersection.number}
	Suppose $X$ is a smooth oriented $(n+1)$--manifold with boundary, $a \in H_n(X)$, and $(\gamma,\partial \gamma)\subset (X, \partial X)$ is a smooth oriented properly embedded compact 1-manifold. 
	Let $\omega_\gamma$ be a Thom form of $\gamma$. Define the intersection number of $a$ and $\gamma$ to be 
	$\langle a, [\omega_\gamma] \rangle,$
	where $[\omega_\gamma] \in H^n_{dR}(\inte(X)) \cong H^n(X,\mathbb{R})$ denotes the cohomology class of $\omega_\gamma$. 
\end{defi}

\begin{rema}
	If $a = [M]$ for a smooth embedded compact $n$-manifold $M$, then the intersection number of $a$ and $\gamma$ equals $\int_M \omega_\gamma$. If we further assume that $M$ intersects $\gamma$ transversely, then the integral is equal to the signed counting of the number of intersection points between $M$ and $a$. 
\end{rema}

\begin{rema}
Alternatively, the intersection number of $a$ and $\gamma$ can be defined without using differential forms as follows. Let $[\gamma]\in H_1(X,\partial X)$ be the fundamental class of $\gamma$. 
The inclusion map induces an isomorphism between $H_n(\inte(X))$ and $H_n(X)$, so we may view $a$ as an element of $H_n(\inte(X))$. Poincar\'e duality gives an isomorphism 
\[
PD: H_n(\inte(X)) \to H_c^1 (\inte(X)),
\]
where $H_c^*$ denotes the compactly supported cohomology. By the definition of $H_c^1 (\inte(X))$, there exists a compact set $C\subset \inte(X)$ such that $PD(a)$ is represented by an element 
\[
a' \in H^1 (\inte(X), \inte(X)\setminus C).
\]
The intersection number of $a$ and $\gamma$ is equal to the pairing of $a'$ with the image of $[\gamma]$ under the map 
\[
H_1(X, \partial X) \to H_1 (X, X\setminus C) \xrightarrow{\cong} H_1 (\inte(X), \inte(X)\setminus C),
\]
where the first arrow is induced by inclusion, and the second map is the excision isomorphism. 
\end{rema}

The key topological property that enables us to construct homologically minimizing hypersurfaces in $\tilde \Omega_M$ is the following Lemma.

\begin{lemm}
	\label{lem_intersection_nonzero}
	Suppose $0\neq a \in H_n(\tilde{\Omega}_M)$. Then there exists a smooth oriented properly embedded compact 1-manifold $(\gamma,\partial \gamma)\subset (\tilde{\Omega}_M, \partial \tilde{\Omega}_M)$ such that the intersection number of $\gamma$ and $a$ is non-zero. 
\end{lemm}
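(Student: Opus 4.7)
\emph{Plan.} My approach is to represent $a\in H_n(\tilde\Omega_M;\ZZ)$ by a smooth closed oriented embedded compact hypersurface $Z\subset\inte(\tilde\Omega_M)$ using Poincar\'e--Lefschetz duality, and then produce $\gamma$ as a transverse ``dual'' 1-manifold through a single point of $Z$. For the oriented noncompact $(n+1)$-manifold with boundary $\tilde\Omega_M$, Poincar\'e--Lefschetz duality gives an isomorphism $H_n(\tilde\Omega_M;\ZZ)\cong H^1_c(\tilde\Omega_M,\partial\tilde\Omega_M;\ZZ)$, and classes in the latter are represented by smooth maps $f:\tilde\Omega_M\to S^1$ that send both $\partial\tilde\Omega_M$ and the complement of some compact set $K\subset\inte(\tilde\Omega_M)$ to a chosen basepoint. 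Picking a regular value $q\in S^1$ distinct from the basepoint, the preimage $Z:=f^{-1}(q)\subset K\cap\inte(\tilde\Omega_M)$ is a smooth compact closed oriented embedded $n$-dimensional submanifold with $[Z]=a$.

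\emph{Construction of $\gamma$.} If $Z$ does not separate $\tilde\Omega_M$, then $\tilde\Omega_M\setminus Z$ is path-connected: joining the two local sides of $Z$ near a chosen $z\in Z$ by a path in $\tilde\Omega_M\setminus Z$ and closing it through a small tubular arc through $z$ produces a smooth properly embedded compact circle $\gamma$ (with $\partial\gamma=\emptyset$, hence trivially properly embedded) meeting $Z$ transversely once, so $\gamma\cdot Z=\pm 1$. If $Z$ separates $\tilde\Omega_M$ into components $U_+,U_-$, then the key claim (addressed below) is that both $U_\pm$ meet $\partial\tilde\Omega_M$; granting this, I choose a tubular neighborhood $Z\times(-\epsilon,\epsilon)$, join $(z,\pm\epsilon/2)$ to points $q_\pm\in U_\pm\cap\partial\tilde\Omega_M$ by smooth arcs in $U_\pm$, and concatenate through the tubular arc at $z$ to obtain a smooth properly embedded compact oriented arc $(\gamma,\partial\gamma)\subset(\tilde\Omega_M,\partial\tilde\Omega_M)$ with $\gamma\cap Z=\{z\}$ transversely, hence $\gamma\cdot Z=\pm1$. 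In either case, by the Remark preceding the statement, $\gamma\cdot[Z]=\pm1$ agrees with $\langle a,[\omega_\gamma]\rangle\neq 0$.

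\emph{Main obstacle.} The delicate step is the claim that in the separating case both sides of $Z$ meet $\partial\tilde\Omega_M$. In a general noncompact manifold with boundary this can fail: e.g.\ $\{t_0\}\times Y_0\subset[0,\infty)\times Y_0$ represents a nonzero $H_n$ class whose ``interior'' side is disjoint from $\{0\}\times Y_0$, and no compact properly embedded 1-manifold with boundary on $\{0\}\times Y_0$ has nonzero algebraic intersection with it. What rules out this pathology here is that $\tilde\Omega_M$ is a covering of a compact manifold $\Omega_M$ with $\partial\Omega_M\neq\emptyset$. Indeed, if $U_+\cap\partial\tilde\Omega_M=\emptyset$, then $\overline{U_+}$ is closed in $\inte(\tilde\Omega_M)$ with compact boundary $Z$: a compact $\overline{U_+}$ gives $[Z]=\partial[\overline{U_+}]=0$, contradicting $a\neq 0$, while a noncompact $\overline{U_+}$ is excluded by the cocompactness of the deck action together with the presence of $\partial\tilde\Omega_M$. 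Establishing this structural statement --- either by a direct end analysis of the covering, or equivalently by verifying that the forgetful map $H^1_c(\tilde\Omega_M,\partial\tilde\Omega_M;\RR)\to H^1(\tilde\Omega_M,\partial\tilde\Omega_M;\RR)$ is injective in this setting, so that universal coefficients detect $a$ via an integer class in $H_1(\tilde\Omega_M,\partial\tilde\Omega_M;\ZZ)$ realized by a compact properly embedded 1-manifold --- is the main technical work of the lemma.
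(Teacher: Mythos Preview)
Your strategy is genuinely different from the paper's. You represent $a$ by a compact embedded hypersurface $Z=f^{-1}(q)$ via the classifying map for $H^1_c$, and then look for a curve dual to $Z$. The paper instead encloses a chain representing $a$ inside a compact codimension--zero submanifold $\hat\Omega\subset\tilde\Omega_M$ (a large geodesic ball together with the bounded pieces of its complement), applies Poincar\'e--Lefschetz duality on $\hat\Omega$ to obtain a properly embedded $1$--manifold $\hat\gamma\subset\hat\Omega$ pairing nontrivially with $a$, and then extends each endpoint of $\hat\gamma$ lying on $\partial\hat\Omega\setminus\partial\tilde\Omega_M$ out to $\partial\tilde\Omega_M$ through the unbounded complementary pieces.

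Both routes bottleneck at the same geometric fact: every noncompact region of $\tilde\Omega_M$ cut off by a compact set must meet $\partial\tilde\Omega_M$. Here your proposal has a real gap. Your appeal to ``cocompactness of the deck action'' is not available, because the covering $\tilde\Omega_M\to\Omega_M$ is in general \emph{not normal} (the paper stresses this explicitly), so there is no deck group acting cocompactly. The paper's argument is purely metric and does not need any group action: every point of $\Omega_M$ is joined to $\partial\Omega_M$ by a geodesic of length at most $\diam(\Omega_M)$, and geodesics lift to the cover; hence a point of $\tilde\Omega_M$ at distance greater than $\diam(\Omega_M)$ from a given compact frontier admits a geodesic to $\partial\tilde\Omega_M$ that cannot cross that frontier.

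There is a second, smaller gap. Your separating case tacitly treats $Z$ as connected: you write $[Z]=\partial[\overline{U_+}]$, but when $Z=f^{-1}(q)$ has several components the boundary $\partial\overline{U_+}$ is in general only a signed sub--sum of them, so ``$\overline{U_+}$ compact $\Rightarrow[Z]=0$'' fails, and the complement of $Z$ may have more than two components. You can repair this either by first tubing $Z$ to a connected representative (matching coorientations, which is not entirely automatic) or by iteratively discarding the null--homologous boundary of each compact complementary region disjoint from $\partial\tilde\Omega_M$. The paper's route avoids this issue altogether because it never chooses a hypersurface representative: duality on the compact $\hat\Omega$ produces $\hat\gamma$ directly from the homology class.
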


\begin{rema}
	We emphasize that Lemma \ref{lem_intersection_nonzero} holds for all covering spaces of a compact manifold with nonempty boundary, and it relies on the existence of boundary in an essential way. For example, if $M$ is a closed $n$--manifold, then $[M]\in H_n(\mathbb{R}\times M)$ has intersection number zero with every compact $1$--submanifold of $\mathbb{R}\times M$. 
\end{rema}

\begin{proof}[Proof of Lemma \ref{lem_intersection_nonzero}]
	If $\tilde{\Omega}_M$ is compact, then the desired result follows from Poincar\'e duality. In the following, we assume that $\tilde{\Omega}_M$ is non-compact. Recall that $\tilde g$ denotes the pull-back metric on $\tilde \Omega_M$ via the covering map.
	
	Let $x$ be a fixed point in $\inte(\tilde{\Omega}_M)$, and let $B_\rho(x)$ be the geodesic ball with radius $\rho$ centered at $x$. 
	For generic $\rho$, we have $\partial B_\rho(x)$ is a properly embedded, compact, smooth submanifold of $\tilde{\Omega}_M$ with codimension $1$. 
	Since every homology class is represented by finitely many singular simplices, for $\rho$ sufficiently large, the homology class $a$ is contained in the image of $H_n(B_\rho(x)) \to H_n(\tilde{\Omega}_M)$. 
	From now, let $\rho$ be a fixed number that is both sufficiently large and generic so that the above properties hold.

	Since $\partial B_\rho(x)$ has only finitely many connected components, the set $\tilde{\Omega}_M\setminus B_\rho(x)$ has at most finitely many connected components. Let $C_1,\dots,C_n$ be the closures of the connected components of $\tilde{\Omega}_M\setminus B_\rho(x)$. Then each $C_i$ is a manifold with corners, where the codimension of each corner stratum is at most $2$. 
	
	\begin{claim}
		For each non-compact $C_i$, we have $\partial C_i\cap \partial \tilde{\Omega}_M \neq \emptyset$.
	\end{claim}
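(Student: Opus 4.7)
The plan is to argue by contrapositive: I will assume $\partial C_i \cap \partial \tilde\Omega_M = \emptyset$ and deduce that $C_i$ is compact, contradicting the standing non-compactness hypothesis. Note first that this assumption is equivalent to $C_i \cap \partial\tilde\Omega_M = \emptyset$, since any point of $C_i$ on $\partial\tilde\Omega_M$ automatically lies in the manifold-with-corners boundary $\partial C_i$.

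The geometric input is a uniform bound on the distance to $\partial\tilde\Omega_M$. Because $\tilde g = \pi^* g$, the map $\pi$ is a local isometry and $\pi^{-1}(\partial\Omega_M) = \partial\tilde\Omega_M$. Given any $\tilde q \in \tilde\Omega_M$, a minimizing path in $\Omega_M$ from $\pi(\tilde q)$ to its nearest point on $\partial\Omega_M$ has length at most $D := \diam_g(\Omega_M) < \infty$, and by unique path-lifting it lifts to a path of the same length starting at $\tilde q$ and ending on $\partial\tilde\Omega_M$. Hence
\[
d_{\tilde g}(\tilde q,\, \partial\tilde\Omega_M) \;\le\; D \qquad \text{for every } \tilde q \in \tilde\Omega_M.
\]

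Now fix $\tilde q \in C_i$; by assumption $\tilde q \notin \partial\tilde\Omega_M$, so a minimizing geodesic $\sigma:[0,L] \to \tilde\Omega_M$ from $\tilde q$ to $\partial\tilde\Omega_M$ has length $L \le D$, with $\sigma(0) = \tilde q \in C_i$ and $\sigma(L) \in \partial\tilde\Omega_M \subset \tilde\Omega_M \setminus C_i$. Since $C_i$ is closed, $\sigma$ must cross the topological frontier of $C_i$ in $\tilde\Omega_M$, which is contained in the compact set $\partial B_\rho(x)$. Therefore every $\tilde q \in C_i$ lies within distance $D$ of the compact set $\partial B_\rho(x) \cap C_i$, i.e.
\[
C_i \;\subset\; \{\, y \in \tilde\Omega_M : d_{\tilde g}(y,\, \partial B_\rho(x) \cap C_i) \le D \,\}.
\]
Since $(\tilde\Omega_M,\tilde g)$ is a complete Riemannian manifold of bounded geometry (as a Riemannian cover of the compact $\Omega_M$), closed bounded subsets there are compact, so $C_i$ is compact --- contradiction.

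I expect the main conceptual step to be the boundary-distance bound: it is where the hypothesis $\partial\Omega_M \ne \emptyset$ enters in a genuinely essential way, consistent with the remark following Lemma \ref{lem_intersection_nonzero} that the analogous conclusion fails for covers of closed manifolds (e.g.\ $\mathbb{R} \times M$). Once this bound is in hand, the remaining argument is a routine combination of path-lifting with Hopf-Rinow.
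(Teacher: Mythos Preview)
Your proof is correct and follows essentially the same approach as the paper's: both arguments hinge on the observation that every point of $\tilde\Omega_M$ lies within distance $D=\diam_g(\Omega_M)$ of $\partial\tilde\Omega_M$, obtained by lifting a short path from the compact base, and then use that a path from $C_i$ to $\partial\tilde\Omega_M$ must cross the compact frontier $\partial C_i\subset\partial B_\rho(x)$. The paper organizes the contradiction by first choosing $p\in C_i$ with $d(p,\partial C_i)\ge R>D$ (which, like your final step, implicitly uses Heine--Borel for the cover) and then producing the lifted geodesic of length $<R$ reaching $\partial\tilde\Omega_M$; you instead show directly that $C_i$ is bounded and invoke properness of the cover. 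One minor point: you appeal to a \emph{minimizing} geodesic $\sigma$ in $\tilde\Omega_M$ from $\tilde q$ to $\partial\tilde\Omega_M$, whose existence is not immediate in a noncompact manifold with boundary---but you do not need it, since the lifted path from your previous paragraph already does the job (this is what the paper uses, via its Property~(2)).
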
 
	\begin{proof}[Proof of the claim]
		Assume there exists a $C_i$ that is non-compact and $\partial C_i\cap \partial \tilde{\Omega}_M = \emptyset$. Then $C_i$ is a manifold with boundary, and $\partial C_i\subset \partial B_\rho(x)$. 
		
		Since $\Omega_M$ is a compact connected Riemannian manifold with a non-empty boundary, it has the following two properties. These are straightforward extensions of the Hopf--Rinow theorem to manifolds with boundary:
		\begin{enumerate}
			\item Every geodesic $[0,\epsilon)\to \inte(\Omega_M)$  either extends to a geodesic $[0,+\infty)\to \inte(\Omega_M)$ or to a geodesic $[0,l]\to \Omega_M$ such that $[0,l)$ is mapped to $\inte(\Omega_M)$ and $l$ is mapped to $\partial \Omega_M$. 
			\item For every $p\in \inte(\Omega_M)$, there exists $q\in \partial \Omega_M$ and a geodesic $\gamma$ from $p$ to $q$, such that $\gamma\perp \partial \Omega_M$ at $q$, and the length of $\gamma$ equals the distance between $p$ and $\partial \Omega_M$. 
		\end{enumerate}
		
		By Property (1) above, a geodesic on $C_i$ either extends indefinitely or intersects $\partial C_i$ in finite time. Fix a constant $R$ such that 
		$R > \diam \Omega_M.$
		Since $C_i$ is non-compact, there exists $p\in C_i$ such that the distance from $p$ to $\partial C_i$ is at least $R$. Therefore, every geodesic starting at $p$ can be extended to a geodesic in $C_i$ with length at least $R$. On the other hand, let $\pi(p)$ be the image of $p$ on $\Omega_M$. By Property (2) above, there exists a geodesic on $\Omega_M$ from $\pi(p)$ to $\partial \Omega_M$ with length at most $\diam \Omega_M$. It lifts to a geodesic on $\tilde{\Omega}_M$, which starts at $p$ and ends on $\partial \tilde{\Omega}_M$ and has length at most $\diam \Omega_M<R$.  This yields a contradiction. 
	\end{proof}
	
	Now we finish the proof of the lemma using the claim. Let $\hat \Omega\subset \tilde{\Omega}_M$ be the union of $B_\rho(X)$ and all the $C_i$'s which are compact. Then $\hat \Omega$ is a compact manifold with corners, and $a$ is in the image of $H_3(\hat \Omega) \to H_3(\tilde{\Omega}_M)$. 
	
	Let $\hat a\in H_3(\hat \Omega)$ be a preimage of $a$. Since $a\neq 0$, we know that $\hat a\neq 0$. Note that $\hat \Omega$ is a compact smooth manifold with boundary after smoothing the corners. By Poinca\'e duality, there exists a properly embedded smooth 1-manifold $(\hat \gamma,\partial \hat \gamma)\subset (\hat \Omega, \partial \hat \Omega)$ such that the intersection number of $\hat a$ with $\hat \gamma$ is non-zero, and we may perturb $\hat \gamma$ such that $\partial \gamma$ does not intersect the corners of $\partial\hat \Omega$. 
	
	For every $q\in \partial \hat \gamma$ with $q\notin \partial\tilde{\Omega}_M$, there exists a unique $C_i$ such that $C_i$ is non-compact and $q\in \partial C_i$. By the above claim, there exists $q'\in \partial C_i\cap \partial \tilde{\Omega}_{M}$. Let $\gamma_q$ be a properly embedded arc in $C_i$ that connects $q$ and $q'$. After perturbing $\gamma$ and $\gamma_q$ near $q$, we may further assume that $\gamma\cup \gamma_q$ is smooth near $q$. 
	
	Let 
	\[
	\gamma = \hat \gamma\cup(\cup_{q} \gamma_q),
	\]
	where the union takes over all $q$ such that $q \in \partial \hat \gamma, q\notin \partial \tilde{\Omega}_M$. If $n\ge 2$, then after a generic perturbation, $\gamma$ is a properly embedded $1$--manifold in $\tilde{\Omega}_M$. If $n=1$, then after a generic perturbation $\gamma$ is a properly immersed $1$--manifold with transverse self-intersections, and we can resolve the self-intersection of $\gamma$ to obtain an properly embedded $1$--manifold in $\tilde{\Omega}_M$ with the same homology class.
	
	Since all $\gamma_q$ are disjoint from $\inte(\hat \Omega)$, a Thom form of $\gamma$ on  $\tilde\Omega_M$ restricts to a Thom form of $\hat \gamma$ on $\hat \Omega$. So the intersection number of $a$ and $\gamma$ in $\tilde\Omega_M$ is equal to the intersection number of $\hat a$ and $\hat \gamma$ in $\hat \Omega$, which is non-zero by the definition of $\hat \gamma$. Hence the lemma is proved. 
\end{proof}

We are now ready to continue the proof of Proposition \ref{prop.topology.extremal.minimal}.

\begin{proof}[Proof of Proposition \ref{prop.topology.extremal.minimal}, continued]
	Consider the minimization problem
	\[\tilde I = \inf\{\cH^n(N): N  \in [Y_0] \}. \]
	Let $(\gamma_0,\partial\gamma_0)\subset (\tilde \Omega_M, \partial \tilde \Omega_M)$ be the compact embedded curve constructed in Lemma \ref{lem_intersection_nonzero}.
	
	Let $\omega_0$ be the Thom form of $\gamma_0$ as in Definition \ref{defi.intersection.number}. 
	By definition, $\omega_0$ is compactly supported in a neighborhood of $\gamma_0$.
	Then any $n$-cycle $N\in [Y_0]$ satisfies that $\int_N \omega_0 \neq 0$.
	
	For $\rho \gg 1$ consider the $\tilde g$-geodesic ball $B_\rho (\tilde y)$, and let $S_\rho(\tilde y)$ be the $\tilde g$-geodesic sphere. Choose $\rho$ large enough such that $[Y_0]\ne 0\in H_n(B_\rho(\tilde y))$. Perturbing $\rho$ a bit if necessary, we assume that $S_\rho(\tilde y)$ meets $\partial \tilde \Omega_M$ transversely. Deform the metric $\tilde g$ to $\tilde g'$ in a small neighborhood of $S_\rho (\tilde y)$, such that $S_\rho(\tilde y)$ is $\tilde g'$ strictly mean convex and meets $\partial \tilde \Omega_M$ orthogonally. We then may minimize the $\cH^n$ volume (with respect to $\tilde g'$) in the nontrivial homology $[Y_0]$ in $(B_\rho(\tilde y), \tilde g')$, and obtain a possibly disconnected area minimizing hypersurface $N_\rho$. Since
	\[\int_{N_\rho} \omega_0\ne 0, \]
	each $N_\rho$ has a non-trivial intersection with the compact set $\supp \omega_0$. Consider $N_\rho^0$ the union of all connected components of $N_\rho$ that intersect $\supp\omega_0$. Since $N_\rho^0$ has uniformly bounded $\cH^n$-volume, standard curvature estimates imply that they subsequentially (which we do not relabel) $C^\infty$ graphically converge to a limit $N^0$ (possibly with integer multiplicity at this moment). Note that $N^0$ is compact: since $\tilde \Omega_M$ has bounded geometry, by the monotonicity formula, there exist $r_0>0$ and $\eps_0>0$ depending only on $\Omega_M$ such that if $x\in N^0$, then the intersection of $N^0$ and $B_{r_0}(x)$ has $\cH^n$-volume at least $\eps_0$.
	
	Therefore we conclude that $\{N_\rho^0\}_\rho$ is also compactly supported, and the convergence to $N^0$ in fact holds as currents. In particular, $N_\rho^0$ is homologous to $N^0$ for sufficiently large $\rho$.
	
	If $N^0$ is homologous to $Y_0$, we are done. Otherwise, consider the homology class $[Y_0] - [N_0]$. By Lemma \ref{lem_intersection_nonzero}, there exists a properly embedded compact curve $(\gamma_1,\partial \gamma_1)\subset (\tilde \Omega_M, \partial\tilde \Omega_M)$ that has a non-zero intersection number with $[Y_0] - [N_0]$. Denote by $\omega_1$ the Thom class of $\gamma_1$. Then for all sufficiently large $\rho$, the minimizing sequence $N_\rho - N_\rho^0$ satisfies that
	\[\int_{N_\rho - N_\rho^0}\omega_1 \ne 0.\]
	Let $N_\rho^1$ be the union of the connected components of $N_\rho - N_\rho^0$ that intersect the support of $\omega_1$. 
	By passing to a further subsequence (which again we do not relabel), the same argument as above finds a compact minimizing hypersurface $N_1$ as the limit of $\{N_\rho^1\}$.
	
	Inductively, assuming that we have constructed $N_0,\cdots,N_k$. If $N_0 + \cdots + N_k$ (taking the sum as currents) is not homologous to $Y_0$, we apply the above argument and construct a compact minimizing hypersurface $N_{k+1}$ from the minimizing sequence $\{N_\rho-(N_\rho^0+\cdots+N_\rho^k)\}$. Note that since $\tilde \Omega_M$ is the covering space of a compact manifold, any minimal hypersurface has a uniform lower bound on its $\cH^n$-volume. This implies that this construction terminates in finitely many steps, since $\{N_\rho \}$ has a uniform $\cH^n$-volume upper bound. We thus conclude that there exists a compact area minimizing hypersurface $N$ (possibly disconnected) in the homology class $[Y_0]$. 
	Note that $N$ is not contained entirely in $\partial \tilde \Omega_M$, as the only $n$-currents entirely contained in $\partial \tilde \Omega_M$ that are homologous to $[Y_0]$ are in the form $Y_0 + k \partial \tilde \Omega_M$ for some integer $k$ (if $\partial \tilde \Omega_M$ is non-compact, then we must have $k=0$). Since $Y_0$ is isometric to $Y$ and $\pi^{-1}(Y)$ has at least two connected components,  the mass of a current of the form $Y_0 + k \partial \tilde \Omega_M$ is at least $\cH^n(Y_0)$. We know $N\ne Y_0$ (as otherwise we would have picked $M=Y$), so $N$ cannot be contained entirely in $\partial \tilde{\Omega}_M$.

	Let $\hat N = \pi(N)$. If $\hat N$ is embedded, then it is homologous to $Y$ in $\Omega_M$, and bounds a region with smaller volume than $\cH^{n+1}(\Omega_M)$, contradicting the choice of $M$. If $\hat N$ is immersed, then it still represents the homology class $[Y]$ in $H_n(\Omega_M, \ZZ)$. We may then minimize the $n$-dimensional volume in an open neighborhood of $Y$ in $\Omega_M$, among hypersurfaces homologous to $Y$ that does not intersect $\hat N$ (note that immersed minimal hypersurfaces are weakly mean convex as self-intersections form an angle that is strictly less than $\pi$). This produces another stable minimal hypersurface $M'\in \cS$ such that $\cH^{n+1}(\Omega_{M'})<\cH^{n+1}(\Omega_M)$, contradiction.
\end{proof}

\begin{rema}\label{remark.simple.case.white}
	If the covering $(\tilde \Omega_M , \tilde y)\to (Y,y)$ is normal and the minimizer is connected, then we may also find the homologically minimizing hypersurface $N$ by translating (with deck transformations) a minimizing sequence to intersect a fixed compact set. This is the case treated by White \cite{White1992topology}.
\end{rema}

\begin{rema}
	Using Lemma \ref{lem_intersection_nonzero}, we actually proved the following existence result for stable minimal hypersurfaces. Let $(\Omega^{n+1},g)$ be a compact manifold with nonempty boundary, and $\partial \Omega$ is $g$-weakly mean convex. Let $(\tilde \Omega,\tilde g)$ be a covering space. Then any nonzero homology class $\alpha\in H_n(\tilde \Omega,\ZZ)$ can be represented by a compact area-minimizing hypersurface.
\end{rema}

\begin{rema}
	Although not used in this paper, we observe that an analogous statement also holds for minimal hypersurfaces with obstacles. That is, without assuming that $\partial X$ is weakly mean convex, we may still consider the minimization problem
	\[\inf \{\cH^n(\partial \Omega): \Omega \text{ is an open set of }X \text{ containing }Y  \}.\]
	Wang \cite[Corollary 3.3]{Wang2022obstacle} proved that local minimizers of this problem enjoys a $C^1$ compactness property. Thus, we may consider, among all open sets $\Omega$ containing $Y$ that locally minimizes $\cH^n(\partial \Omega)$, the one with the smallest volume. The proof of Proposition \ref{prop.topology.extremal.minimal} carries over to this situation verbatim and implies that $\pi_1(Y)\to \pi_1(\Omega)$ is surjective.
\end{rema}

\begin{rema}
	On the other hand, the proof of Proposition \ref{prop.topology.extremal.minimal} does not seem to easily extend to the case of stable constant mean curvature hypersurfaces, or more generally prescribed mean curvature surfaces. One key issue is that, in the universal cover $\tilde \Omega_M$, other connected components of $\pi^{-1}(Y)$ have a reversed bound of mean curvature, when regarded as a barrier for the minimization problem in the homology class of $Y_0$.
\end{rema}

\section{Eliminating space forms}\label{section.space.forms}

Now we focus on the case when $n+1=4$ and prove Theorem \ref{theo.main.psc}. The proof of Theorem \ref{theo.main.biricci} is similar but simpler. Given $(X^4,g)$ satisfying the assumptions of Theorem \ref{theo.main.psc}, let $D$ be a simply connected component of $S^4\setminus \iota(X)$, and $Y$ be the boundary component of $X$ such that $\iota(Y)=\partial D$. We apply Proposition \ref{prop.topology.extremal.minimal} to find an embedded stable minimal hypersurface $M$ and the connected region $\Omega_M$, 
such that $i_*: \pi_1(Y)\to \pi_1(\Omega_M)$ is surjective. If $M=Y$, we define $\Omega_M=Y$.


Denote by $A=\iota(\Omega_M)\cup_{Y} D$. Since $\pi_1(D) = 1$ and $i_*$ is surjective, it follows from the van Kampen theorem that $\pi_1 (A)=1$.
By the definitions of $\Omega_M$ and $A$, we know that $\iota(M)\subset \partial A$.

On the other hand, since $M$ is a stable minimal hypersurface in $(X^4,g)$ with $R_g>0$, $M$ itself admits a PSC metric. Therefore, each connected component of $M$ is diffeomorphic to a connected sum of spherical space forms and $S^2\times S^1$'s. The next basic lemma further implies that, in fact, each connected component of $\partial A$ individually bounds a simply connected domain of $S^4$. 

\begin{lemm}\label{lemm.reduction.to.connected}
	Suppose $A\subset S^4$ is a simply connected domain with smooth boundary, and let $M$ be a connected component of $\partial A$. Then $S^4\setminus M$ has two connected components. Let $A', B$ be the closures of the components of $S^4\setminus M$ such that $A\subset A'$. Then $A'$ is also simply connected.
\end{lemm}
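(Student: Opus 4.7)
The proof has two parts: a topological setup for the decomposition, and a van Kampen calculation for simple connectivity.

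For the decomposition, note that $M$ is a closed connected orientable $3$-manifold in $S^4$ (orientability: $A$ is simply connected, hence orientable, so $\partial A$ is too). Alexander duality yields
\[
\tilde H_0(S^4 \setminus M) \cong \tilde H^3(M;\mathbb{Z}) \cong \mathbb{Z},
\]
so $S^4 \setminus M$ has exactly two components, whose closures we call $A'$ and $B$. The containment $A \subset A'$ follows by observing that $\mathrm{int}(A)$ is connected and disjoint from $M$, hence lies in one component (which we name $U$, so $A' = \bar U$); every other boundary component $N_i$ of $A$ lies in $\overline{\mathrm{int}(A)} \subset A'$ and avoids $M$, so $N_i \subset A'$ as well.

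The key observation for simple connectivity is that since $M \subset \partial A \subset A \subset A'$, the inclusion $M \hookrightarrow A'$ factors through $A$. Hence the induced homomorphism on $\pi_1$ factors as $\pi_1(M) \to \pi_1(A) \to \pi_1(A')$, and is therefore trivial because $\pi_1(A) = 1$. After replacing $A'$ and $B$ by open thickenings with the same homotopy type (using a collar neighborhood of $M$), van Kampen's theorem applied to $S^4 = A' \cup B$ with connected intersection $M$ gives
\[
1 = \pi_1(S^4) = \pi_1(A') \ast_{\pi_1(M)} \pi_1(B).
\]
Since the map $\pi_1(M) \to \pi_1(A')$ is trivial, the amalgamation relation $i_{A'}(m) = i_B(m)$ becomes $1 = i_B(m)$ for each $m \in \pi_1(M)$, killing the image of $\pi_1(M)$ inside $\pi_1(B)$. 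The amalgamated product therefore collapses into an ordinary free product:
\[
\pi_1(A') \,\ast\, \bigl(\pi_1(B) \big/ \langle\langle \mathrm{im}(\pi_1(M) \to \pi_1(B)) \rangle\rangle\bigr) = 1.
\]
Since a free product of groups is trivial only when each factor is trivial, we conclude $\pi_1(A') = 1$.

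The only real obstacle is set-theoretic bookkeeping: verifying the decomposition $S^4 = A' \cup B$ with $A' \cap B = M$ and $A \subset A'$, and choosing the right open thickenings so that van Kampen applies to honest open sets with the correct homotopy type. Once this setup is in place, the algebraic collapse of the amalgamated product to a free product is immediate from the triviality of $\pi_1(M) \to \pi_1(A')$, after which simple connectivity of $A'$ is a formal consequence.
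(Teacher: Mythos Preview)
Your argument is correct and is in fact a cleaner route than the one the paper takes. Both proofs hinge on the same observation---that the map $\pi_1(M)\to\pi_1(A')$ is trivial because the inclusion $M\hookrightarrow A'$ factors through the simply connected $A$---but they organize the van Kampen computation differently. The paper decomposes $S^4\setminus A$ into closures $D_1,\dots,D_k$ of its components, first verifies that each $\partial D_i$ is connected, and then inductively computes $\pi_1(A_S)$ for every union $A_S=A\cup\bigl(\cup_{i\in S}D_i\bigr)$, obtaining the free-product formula $\pi_1(A_S)\cong *_{i\in S}\pi_1(D_i/\partial D_i)$; taking $S=\{1,\dots,k\}$ forces each factor to be trivial, and then the case where $S$ omits one index gives $\pi_1(A')=1$. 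You instead cut $S^4$ once along $M$, apply van Kampen to $A'\cup B$, and use the factoring $\pi_1(M)\to\pi_1(A)\to\pi_1(A')$ to collapse the amalgamated product to a free product that must be trivial. Your approach is shorter and avoids the auxiliary claim that each $\partial D_i$ is connected; the paper's approach yields the slightly stronger byproduct that \emph{every} region $A_S$ is simply connected, though only the case of $A'$ is needed for the lemma.
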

\begin{proof}
	Let $D_1,\dots,D_k$ be the closures of the connected components of $S^4\setminus A$. Then 
	$\partial A$ is the disjoint union of $\partial D_1,\dots,\partial D_k$. We first show that every $\partial D_i$ is connected. Assume $\partial D_i$ is not connected, let $M_1,M_2$ be two connected components of $\partial D_i$, let $p$ be a point in the interior of $D_i$, let $q$ be a point in the interior of $A$. Then there exist arcs $\gamma_1,\gamma_2$ from $p,q$ such that $\gamma_i$ intersects $\partial A$ transversely at one point in each $M_i$ ($i=1,2$). As a consequence, the arcs $\gamma_1$ and $\gamma_2$ combine to define a loop in $S^4$ that intersects each of $M_1$ and $M_2$ transversely at one point. This contradicts the fact that $H_1(S^4)=0$.
	
	Let $\hat D_i = D_i/\partial D_i$. By definition, $\hat D_i$ is the quotient space of $D_i$ by collapsing $\partial D_i$ to a point. The space $\hat D_i$ may not be a manifold. 
	Let $S\subset \{1,\dots,k\}$ be non-empty, let $A_S = A \cup (\cup_{i\in S} D_i)$. 
	We claim that 
	\begin{equation}
	\label{eqn_AS_pi1_Di}
	\pi_1(A_S)\cong *_{i\in S}\, (\pi_1(\hat D_i)),
	\end{equation}
	where the right-hand side is the free product. 
	
	We prove \eqref{eqn_AS_pi1_Di} by induction on the number of elements of $S$. If $S=\emptyset$, the statement is trivial. Suppose \eqref{eqn_AS_pi1_Di} holds for all sets $S$ with $l$ elements, and consider a set $S'$ of the form $S' = S\cup \{i\}$ where $S$ has $l$ elements and $i\notin S$. Then $A_{S'} = A_S\cup D_i$. By the van Kampen theorem, 
	\[
	\pi_1(A_{S'}) \cong \pi_1(A_S) *_{\pi_1(\partial D_i)} \pi_1(D_i).
	\]
	Since the inclusion-induced map $\pi_1(\partial D_i) \to \pi_1(A_S)$ factors through $\pi_1(A)$, it is the trivial map. Therefore, we have
	\[
	\pi_1(A_{S'}) \cong \pi_1(A_S) *_{\pi_1(\partial D_i)} \pi_1(D_i) \cong \pi_1(A_S) * (\pi_1(D_i)/\pi_1(\partial D_i)) \cong \pi_1(A_S) * \pi_1(\hat D_i),
	\]
	where $\pi_1(D_i)/\pi_1(\partial D_i)$ denotes the quotient of $\pi_1(D_i)$ by the normal subgroup generated by $\pi_1(\partial D_i)$. Hence \eqref{eqn_AS_pi1_Di} is proved. 
	
	Now we can prove the lemma. Let $S=\{1,\dots,k\}$, we have $S^4 = A_S$, and hence \eqref{eqn_AS_pi1_Di} shows that $\pi_1(\hat D_i)=1$ for all $i$. Applying \eqref{eqn_AS_pi1_Di} again, we conclude that $\pi_1(A_S)=1$ for all $S$. Since the set $A'$ in the statement of the lemma has the form $\Omega_S$ where $S$ contains all but one element of $\{1,\dots,k\}$, we know that $A'$ is simply connected.
\end{proof}

By Lemma \ref{lemm.reduction.to.connected} and the previous discussions, there exists a \emph{connected} stable minimal surface $M$ in $X$ such that $\iota(M)$ bounds a simply connected domain $A$ in $S^4$. Theorem \ref{theo.main.psc} then follows from the following result in topology. 
We will establish a stronger statement that not only determines the topology of $M$ but also finds the homeomorphism type of the domain $A$.

\begin{prop}
	\label{prop_topology_of_cut}
	Suppose $M$ is a connected smooth submanifold of $S^4$ such that one of the connected components of $S^4\setminus M$ is simply connected. Also assume that $M$ has the form
	\[
	M = \big( \#_i (S^3/\Gamma_i)\big)\# \big( \#^k S^1\times S^2\big)
	\]
	where $\Gamma_i$ acts freely and isometrically on $S^3$.
	Then $M \cong  \#^k S^1\times S^2$, and the closure of the simply connected component of $S^4\setminus M$ is homeomorphic to $\natural^k D^2\times S^2$. 
\end{prop}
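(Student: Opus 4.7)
The plan is to combine standard algebraic topology, smooth 4-dimensional gauge theory, and Freedman-style classification. First, write $A$ for the simply connected component of $S^4\setminus M$ and $B$ for its complement, so $S^4=A\cup_M B$. Combining the Mayer--Vietoris sequence for this decomposition with the long exact sequence of the pair $(A,M)$ and Poincaré--Lefschetz duality $H_k(A,M)\cong H^{4-k}(A)$, one extracts $H_1(A)=0$, $H_2(B)=0$, and $H_2(A)\cong H_2(M)\cong \ZZ^k$ with the inclusion $H_2(M)\hookrightarrow H_2(A)$ an isomorphism. Consequently the adjoint $Q_A^\#\colon H_2(A)\to H^2(A)\cong \mathrm{Hom}(H_2(A),\ZZ)$ of the intersection form has kernel equal to its entire domain, so $Q_A\equiv 0$; exactness of the pair sequence then gives $H_1(M)\cong\mathrm{coker}(Q_A^\#)\cong \ZZ^k$, which is torsion free. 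Comparing with the prime decomposition $H_1(M)=\ZZ^k\oplus\bigoplus_i\Gamma_i^{\mathrm{ab}}$, each $\Gamma_i$ must be perfect, and the classification of finite subgroups of $SO(4)$ acting freely on $S^3$ forces $\Gamma_i$ to be trivial or the binary icosahedral group, so the only nontrivial spherical summand allowed in $M$ is the Poincaré homology sphere $P$.

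To eliminate the $P$ summands, suppose $M\cong (\#^l P)\,\#\,(\#^k S^1\times S^2)$ with $l\ge 1$. For each $S^1\times S^2$ summand, attach a smooth 3-handle to $A$ along a 2-sphere of the form $\{*\}\times S^2\subset M$. These spheres are generators of $H_2(M)$ and, via the isomorphism above, generate $H_2(A)$; each 3-handle kills one generator in $H_2$ and performs the surgery on $\partial A$ replacing the corresponding $S^1\times S^2$ factor by $S^3$. After $k$ such attachments one obtains a smooth simply connected 4-manifold $A'$ with $\partial A'\cong \#^l P$ and $H_2(A')=0$, i.e., a smooth simply connected integral homology ball bounded by $\#^l P$. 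Gluing $A'$ to $\natural^l(-W_{E_8})$, where $W_{E_8}$ is the smooth simply connected $E_8$-plumbing with $\partial W_{E_8}=P$, produces a closed, smooth, simply connected 4-manifold of intersection form $-l\,E_8$, which is negative definite but non-diagonalizable over $\ZZ$---contradicting Donaldson's diagonalization theorem. Equivalently, one can phrase this as the nonvanishing of the Fr{\o}yshov invariant of $P$; in noncompact settings the cited extension of gauge theory to end-periodic 4-manifolds due to Taubes provides the analogous obstruction.

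With $M\cong \#^k S^1\times S^2$ established, the linking pairing on $H_1(M)\cong \ZZ^k$ is trivial and $Q_A=0$. Boyer's classification of simply connected compact topological 4-manifolds with prescribed boundary (by intersection form together with its presentation of the boundary linking pairing, up to homeomorphism rel boundary) then identifies $A$ with $\natural^k D^2\times S^2$, as both share the trivial intersection form and the same boundary. The main obstacle is the second step: eliminating the Poincaré summand is a genuinely smooth rather than topological phenomenon---Freedman's theorem guarantees that $P$ bounds a contractible topological 4-manifold---so the argument must produce a smooth 4-manifold whose gauge-theoretic properties are incompatible with the existence of such a filling. Steps one and three are essentially formal applications of exact sequences and a classification theorem.
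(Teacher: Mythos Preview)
Your Step~1 parallels the paper's homology lemma, and your Step~3 via Boyer's classification is a legitimate alternative to the paper's more hands-on route (the paper caps $A$ with $H_k=\natural^k D^3\times S^1$, shows the resulting closed manifold has $H_2=0$ using the embedding of $A$ in $S^4$, applies Freedman to identify it with $S^4$ topologically, and then uses that a regular neighbourhood of a $1$--complex in a simply connected $4$--manifold is smoothly standard). You should verify that Boyer's hypotheses are actually met for this non--homology-sphere boundary, but that is not the main issue.

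The genuine gap is in Step~2. After Step~1 each spherical summand is $\pm P$, but nothing you have proved fixes their orientations: in general $M_1 \cong aP\#b(-P)$ with $a+b=l$, and your writing ``$\#^l P$'' silently assumes $b=0$. Your Donaldson argument needs the capped-off form to be definite, which fails whenever $a,b>0$: gluing copies of $\pm W_{E_8}$ then yields the \emph{indefinite} even form $a(\pm E_8)\oplus b(\mp E_8)$, and Donaldson's diagonalization theorem says nothing. The Fr{\o}yshov-type homomorphisms $\Theta^3_{\ZZ}\to\ZZ$ send $P\mapsto c\neq 0$ and $-P\mapsto -c$, so the fact that $M_1$ bounds the contractible manifold $A'$ gives only $a=b$, not $a=b=0$. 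Eliminating the remaining case $a=b>0$---i.e., showing that $aP\#a(-P)$ does not bound a smooth contractible $4$--manifold---is precisely where Taubes' end-periodic gauge theory \cite{taubes1987gauge} enters, and it is not an ``equivalent phrasing'' of Donaldson or Fr{\o}yshov but a strictly deeper, separate input. The paper's proof makes this two-stage logic explicit: first a homomorphism $\Theta^3_{\ZZ}\to\ZZ$ forces $a=b$, then Taubes' theorem forces $a=0$. (Minor point: you should also check $H_3(A')=0$, so that $A'$ is contractible rather than merely an integral homology ball; this follows easily from Lefschetz duality and the paper verifies it.)
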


The next lemma uses classical arguments to prove some basic properties of the homology groups of the complement of $M$. For its proof, we only need to assume that one of the components of $S^4\setminus M$ has trivial $H_1$. For later reference, we also allow $M$ to be a locally flat (but not necessarily smooth)  submanifold. 

\begin{lemm}
	\label{lem_Sigma_homology}
		Suppose $M$ is a locally flat connected oriented $3$-submanifold in $S^4$. Let $A$, $B$ denote the closures of the two components of  the complement of $M$. If $H_1(A)=0$, then
	\begin{enumerate}
		\item 		$H_1(M)$ has no torsion.
		\item  The map $H_2(M)\to H_2(A)$ induced by the inclusion is an isomorphism.
		\item $H_3(A) = 0$.
	\end{enumerate}
	
\end{lemm}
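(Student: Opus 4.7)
My strategy combines three standard ingredients: Mayer--Vietoris for the closed cover $S^4=A\cup B$ with $A\cap B=M$, Alexander duality for the compact ANR $A\subset S^4$, and Poincar\'e--Lefschetz duality on the compact oriented $4$-manifold with boundary $A$. Since $M$ is locally flat, Brown's collar theorem gives a bicollar of $M$ in $S^4$, so I can thicken $A$ and $B$ slightly to open sets that deformation retract onto them; this makes Mayer--Vietoris applicable and identifies $S^4\setminus A$ with a space homotopy equivalent to $B$, so that Alexander duality reads $\tilde H_i(B)\cong \tilde H^{3-i}(A)$ for all $i$.

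I would first dispatch (3). By Poincar\'e--Lefschetz duality, $H_3(A)\cong H^1(A,\partial A)=H^1(A,M)$. The cohomology long exact sequence of the pair $(A,M)$ gives
\[
H^0(A)\to H^0(M)\to H^1(A,M)\to H^1(A).
\]
Since both $A$ and $M$ are connected and $M\neq\emptyset$, the first map is the isomorphism $\ZZ\to\ZZ$; since $H_1(A)=0$, the universal coefficient theorem forces $H^1(A)=0$. Therefore $H^1(A,M)=0$, proving (3).

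For (1), the Mayer--Vietoris piece
\[
0=H_2(S^4)\to H_1(M)\to H_1(A)\oplus H_1(B)\to H_1(S^4)=0
\]
together with $H_1(A)=0$ gives $H_1(M)\cong H_1(B)$. By Alexander duality $H_1(B)\cong H^2(A)$, and the universal coefficient theorem combined with $H_1(A)=0$ identifies $H^2(A)\cong \Hom(H_2(A),\ZZ)$, which is free abelian. Hence $H_1(M)$ is torsion free.

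Finally, for (2), Alexander duality and UCT give $H_2(B)\cong H^1(A)=0$. The Mayer--Vietoris piece
\[
0=H_3(S^4)\to H_2(M)\to H_2(A)\oplus H_2(B)\to H_2(S^4)=0
\]
then collapses to the inclusion-induced isomorphism $H_2(M)\to H_2(A)$. The only point that will require mild care is verifying that Alexander duality and the collar retraction are valid in the locally flat rather than smooth category; this is standard because a compact topological $4$-manifold with locally flat boundary in $S^4$ is a compact ANR and admits a bicollar, so no genuine obstacle is present beyond bookkeeping.
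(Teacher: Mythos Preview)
Your proof is correct. The approach differs from the paper's mainly in which duality you invoke. You use Alexander duality (together with the bicollar to identify $S^4\setminus A\simeq B$) to compute $H_1(B)\cong H^2(A)$ and $H_2(B)\cong H^1(A)=0$, and then Mayer--Vietoris finishes parts (1) and (2) in one stroke each. The paper instead works intrinsically on $A$: for (1) it uses an excision square to show that $H_2(A)\to H_2(A,M)$ factors through $H_2(S^4)=0$, so the long exact sequence of $(A,M)$ gives $H_1(M)\cong H_2(A,M)\cong H^2(A)$ via Lefschetz duality; for (2) it uses Mayer--Vietoris for surjectivity and $H_3(A,M)\cong H^1(A)=0$ (Lefschetz) for injectivity. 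Part (3) is identical in both arguments. Your route is a bit quicker and more symmetric, at the cost of the (standard) justification of Alexander duality in the locally flat category that you flag; the paper's route avoids Alexander duality but needs the small excision diagram instead.
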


\begin{proof}
	(1) We follow an argument from \cite{hillman2024locally}. Consider the following commutative diagram
	\[\begin{tikzcd}
	H_2(A) \arrow{r} \arrow{d}& H_2(A,M) \arrow{d}\\
	H_2(S^4) \arrow{r} & H_2(S^4,B)
	\end{tikzcd}
	\]
	where the maps are induced by the inclusions of spaces. Since $H_2(S^4)=0$, and since by excision, the map $H_2(A,M) \to H_2(S^4,B)$ is an isomorphism, we conclude that the map 	$H_2(A) \to H_2(A,M) $ is zero. Now consider the homology long exact sequence
	\[
	H_2(A) \xrightarrow{0} H_2(A,M)\to H_1(M) \to H_1(A) \cong 0,
	\]
	we have $H_1(M)\cong H_2(A,M)$. By Lefschetz duality, we have $H_2(A,M) \cong H^2(A)$. By the universal coefficient theorem, the torsion of $H^2(A)$ is isomorphic to the torsion of $H_1(A)$, which is zero. So $H_1(M)$ has no torsion.

	(2) By the Mayer-Vietoris sequence
	\[
	0\cong H_3(S^4)\to H_2(M)\to H_2(A)\oplus H_2(B)\to H_2(S^4)\cong 0,
	\]
	we have $H_2(M)\cong H_2(A)\oplus H_2(B)$, so the map $H_2(M)\to H_2(A)$ induced by the inclusion is surjective. 
	On the other hand, by the exact sequence 
	\[
	H_3(A,M)\to H_2(M)\to H_2(A)
	\]
	and Lefschetz duality $H_3(A,M)\cong H^1(A) \cong 0$, we know that the map $H_2(M)\to H_2(A)$ is injective. Hence it is an isomorphism. 
	
	(3) By Lefschetz duality, $H_3(A)\cong H^1(A,M)$. By the cohomology long exact sequence for $(A,M)$, we have an exact sequence 
	\[
	H^0(A) \xrightarrow{\cong}  H^0(M) \to H^1(A,M) \to H^1(A)\cong 0,
	\]
	so $H^1(A,M) \cong 0$. 
\end{proof}

 We continue with the proof of Proposition \ref{prop_topology_of_cut}. 
 Let $A$, $B$ be the closures of the two components of the complement of $M$ such that $\pi_1(A)$ is trivial.
 By the assumptions of Proposition $\ref{prop_topology_of_cut}$, the manifold $M$ has the form $\big( \#_i (S^3/\Gamma_i)\big)\# \big( \#^k S^1\times S^2\big)$. 
 Write $M_1 =  \#_i (S^3/\Gamma_i)$, $M_2 = \#^k S^1\times S^2$. Following the usual convention, $M_1$ or $M_2$ is defined to be $S^3$ if there is no factor in the connected sum expression. 

In the connected sum decomposition of $M$, there are $k$ factors of $S^1\times S^2$. 
Let $\hat{A}$ be obtained from $A$ by attaching $k$ 3-handles to $\partial A = M$, such that the $k$ attaching spheres are given by $\{pt\}\times S^2$ in each component of $S^1\times S^2$. 

\begin{lemm}
	\label{lem_hat_A_contractible}
	$\partial \hat{A}$ is homeomorphic to $M_1$, and $\hat{A}$ is contractible. 
\end{lemm}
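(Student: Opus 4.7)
For the identification $\partial \hat A \cong M_1$, I view each 3--handle attachment as a surgery on $\partial A = M$. A 3--handle $D^3 \times D^1$ is glued along an embedding $\phi: S^2 \times D^1 \hookrightarrow \partial A$, and the resulting boundary is obtained from $\partial A$ by removing a tubular neighborhood of the 2--sphere $\phi(S^2 \times \{0\})$ and capping off with two copies of $D^3$. When the attaching sphere is $\{pt\} \times S^2 \subset S^1 \times S^2$, removing a tubular neighborhood yields $I \times S^2$, and capping off gives $D^3 \cup_{S^2} (I \times S^2) \cup_{S^2} D^3 \cong S^3$. Thus each of the $k$ surgeries replaces one $S^1 \times S^2$ summand of $M$ by an $S^3$ summand, so $\partial \hat A \cong M_1 \# (\#^k S^3) \cong M_1$.

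For contractibility, first $\pi_1(\hat A) = 1$: by van Kampen, since $A$ is simply connected and each attaching region $S^2 \times D^1$ is simply connected, attaching the handles does not change $\pi_1$. Next I compute $H_*(\hat A)$ via the long exact sequence of the pair $(\hat A, A)$. By excision, $H_3(\hat A, A) \cong \ZZ^k$, generated by the cores of the handles, and $H_n(\hat A, A) = 0$ for $n \neq 3$. The connecting map $\partial : H_3(\hat A, A) \to H_2(A)$ sends each generator to the class in $H_2(A)$ of its attaching 2--sphere. Since each $S^3/\Gamma_i$ is a rational homology 3--sphere (as $H_2 = 0$ by Poincar\'e duality, $\Gamma_i$ being finite), $H_2(M) \cong \ZZ^k$ is generated precisely by the spheres $\{pt\} \times S^2$ from the $S^1 \times S^2$ summands, and Lemma \ref{lem_Sigma_homology}(2) shows the inclusion induces an isomorphism $H_2(M) \cong H_2(A)$. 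Therefore $\partial$ is an isomorphism. Combined with $H_3(A) = 0$ (Lemma \ref{lem_Sigma_homology}(3)), $H_1(A) = 0$, and $H_4(\hat A) = 0$ (since $\partial \hat A \neq \emptyset$ and $\hat A$ is connected), the long exact sequence yields $H_n(\hat A) = 0$ for every $n \geq 1$. Since $\hat A$ is a simply connected CW complex with trivial reduced homology, the Hurewicz--Whitehead theorem gives that $\hat A$ is contractible. The step requiring the most care is verifying that $\partial$ is an isomorphism; this hinges on Lemma \ref{lem_Sigma_homology}(2) together with the vanishing $H_2(M_1) = 0$, which together force the attaching spheres to constitute a basis of $H_2(A)$.
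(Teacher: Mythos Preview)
Your proof is correct and follows the same overall strategy as the paper: identify $\partial\hat A$ by tracking the surgery on each $S^1\times S^2$ summand, then kill the homology of $\hat A$ using the long exact sequence of the pair $(\hat A,A)$ together with Lemma~\ref{lem_Sigma_homology}(2),(3), and conclude by Hurewicz--Whitehead.

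The only substantive difference is in how you justify that the connecting map $\partial\colon H_3(\hat A,A)\to H_2(A)$ is an isomorphism. You argue directly that $H_2(S^3/\Gamma_i)=0$ for finite $\Gamma_i$, hence $H_2(M_1)=0$ and the attaching spheres $\{pt\}\times S^2$ form a basis of $H_2(M)\cong\ZZ^k$; Lemma~\ref{lem_Sigma_homology}(2) then carries this basis to $H_2(A)$. The paper instead sets up a commutative square comparing $(\hat A,A)$ with $(F,\partial F)$ (where $F=\overline{\hat A\setminus A}$) and invokes Lemma~\ref{lem_Sigma_homology}(1) to see that $H_2(\partial F)\to H_2(M)$ is an isomorphism. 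Your route is arguably more self-contained here, since the vanishing of $H_2$ for spherical space forms is immediate and does not require the embedding into $S^4$, whereas the paper's appeal to Lemma~\ref{lem_Sigma_homology}(1) routes the same fact through the torsion-freeness of $H_1(M)$. Either way the conclusion is the same, and both arguments ultimately rest on Lemma~\ref{lem_Sigma_homology}(2) to transport the basis from $H_2(M)$ to $H_2(A)$.
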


\begin{proof}
	$\partial \hat {A}$ is obtained from $\partial{A}$ by removing a tubular neighborhood of each attaching sphere and gluing 2 copies of $D^2$ for each removed neighborhood. So $\partial \hat{A}$ is homeomorphic to $M_1$.
	
	Now we show that $H_1(\hat{A}) \cong H_2(\hat{A}) \cong H_3(\hat{A}) \cong 0$. 
	
	Attaching 3-handles does not modify $H_1$, so we have $H_1(\hat {A}) \cong H_1(A)  \cong 0$.
	
	Consider the exact sequence
	\begin{equation}
	\label{eqn_hatA_exact_seq}
	H_3(A)\to H_3(\hat{A})\to H_3(\hat {A},A) \to H_2(A) \to H_2(\hat {A}) \to H_2(\hat {A},A).
	\end{equation}
	By excision, $H_2(\hat {A},A)\cong (H_2(D^3,\partial D^3))^k\cong 0$. Let $F$ be the closure of $\hat {A}\setminus A$ in $\hat{A}$. We have a commutative diagram
	\[\begin{tikzcd}
	H_3(\hat{A},A) \arrow{r} & H_2(A)\\
	H_3(F,\partial F) \arrow{r}\arrow{u}& H_2(\partial F) \arrow{u}
	\end{tikzcd}
	\]
	The vertical arrow from $H_3(F,\partial F)$ to $H_3(\hat{A},A)$ is an isomorphism because of excision. A direct computation shows that the horizontal arrow from $H_3(F,\partial F) $ to $H_2(\partial F)$ is an isomorphism. The vertical arrow from $H_2(\partial F)$ to $H_2(A)$ is the composition of 
	\[
	H_2(\partial F) \to H_2(M) \to H_2(A),
	\]
	where the first map is an isomorphism by Lemma \ref{lem_Sigma_homology}(1), the second map is an isomorphism by Lemma \ref{lem_Sigma_homology}(2). So the boundary map $H_3(\hat{A},A)\to H_2(A)$ is an isomorphism. 	By Lemma \ref{lem_Sigma_homology}(3), $H_3(A)\cong 0$.  So the exact sequence \eqref{eqn_hatA_exact_seq} becomes
	\[
	0\cong H_3(A)\to H_3(\hat{A})\to H_3(\hat {A},A) \xrightarrow{\cong} H_2(A) \to H_2(\hat {A}) \to H_2(\hat {A},A)\cong 0.
	\]
	Therefore, $H_3(\hat {A}) \cong H_2(\hat{A})\cong 0$. 
	
	Since $\hat{A}$ is a connected 4-manifold with non-empty boundary, we have $H_i(\hat {A}) = 0$ for all $i\ge 4$. 
	
	Since attaching 3-handles does not modify $\pi_1$, we know that $\hat A$ is simply connected. 
	
	Therefore, by the Hurewicz theorem, we know that $\pi_i(\hat A)$ is trivial for all $i$, so $\hat A$ is contractible. 
\end{proof}

The next lemma proves the first part of Proposition \ref{prop_topology_of_cut}. 
\begin{lemm}
	\label{lem_Sigma_1_S3}
	$M_1\cong S^3$.
\end{lemm}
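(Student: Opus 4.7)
The plan is to first identify $M_1$ as a connected sum of Poincar\'e homology spheres and then invoke a gauge-theoretic obstruction to rule out every such nontrivial sum.

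By Lemma \ref{lem_hat_A_contractible}, the region $\hat A$ is a contractible smooth $4$-manifold with $\partial \hat A \cong M_1$, so $M_1$ is an integral homology $3$-sphere. Since the first homology of a connected sum of oriented closed $3$-manifolds is the direct sum of the first homologies of the summands, each factor $S^3/\Gamma_i$ is itself a homology sphere, and hence $H_1(S^3/\Gamma_i) \cong \Gamma_i^{\mathrm{ab}} = 0$. I would then appeal to the classification of finite groups acting freely and isometrically on $S^3$: the possibilities are cyclic, binary dihedral, binary tetrahedral, binary octahedral, binary icosahedral, and certain products of these with coprime cyclic factors. Among these, the only perfect group is the binary icosahedral group $I^*$. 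Consequently $M_1 \cong \#^n \Sigma$ for some $n \ge 0$, where $\Sigma = S^3/I^*$ denotes the Poincar\'e homology sphere.

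The remaining task, and the \emph{main obstacle} of the proof, is to rule out $n \ge 1$. A first attempt via the Rokhlin invariant $\mu$ only yields that $n$ is even: since $\hat A$ is smooth, spin (automatic from contractibility), and of signature zero, Rokhlin's theorem gives $\mu(M_1) = 0 \in \mathbb{Z}/2$, while $\mu$ is additive under connected sum with $\mu(\Sigma) = 1$. To kill even values of $n$, I would invoke the Fr\o yshov invariant $h$ from instanton Floer homology, whose construction and key positivity properties rest on the gauge-theoretic framework of Taubes \cite{taubes1987gauge}. Specifically, I would use that $h(\Sigma) \neq 0$, that $h$ is additive under connected sum, and that $h$ vanishes on any integral homology $3$-sphere that bounds a smooth integer homology $4$-ball. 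Since contractibility of $\hat A$ supplied by Lemma \ref{lem_hat_A_contractible} provides such a bounding manifold for $M_1$, additivity forces $n = 0$, and hence $M_1 \cong S^3$.
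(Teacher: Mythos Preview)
Your argument has a real gap at the step where you conclude $M_1 \cong \#^n\Sigma$. The prime decomposition of the \emph{oriented} manifold $M_1$ produces oriented prime factors, and the Poincar\'e homology sphere is chiral: $\Sigma$ and $-\Sigma$ are not orientation-preservingly diffeomorphic. The classification of perfect spherical space-form groups therefore only yields
\[
M_1 \;\cong\; a\,\Sigma \ \#\ b\,(-\Sigma)
\]
for some nonnegative integers $a,b$, and nothing so far forces the factors to share an orientation.

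This breaks your Fr{\o}yshov step. Any homology-cobordism homomorphism $h:\Theta^3_{\ZZ}\to\ZZ$ satisfies $h(-Y)=-h(Y)$, so additivity gives $h(M_1)=(a-b)\,h(\Sigma)$; the vanishing coming from the contractible filling $\hat A$ yields only $a=b$, not $a+b=0$. (Your Rokhlin step similarly gives only $a+b$ even, which is already implied by $a=b$.) Homomorphism-valued invariants alone cannot exclude $M_1\cong a\Sigma\,\#\,a(-\Sigma)$ with $a\ge 1$.

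This is precisely where the paper invokes a different result of Taubes \cite[Theorem~1.7]{taubes1987gauge}: the manifold $a\Sigma\,\#\,a(-\Sigma)$ does not bound a smooth contractible $4$-manifold unless $a=0$. That theorem is not a consequence of the Fr{\o}yshov framework and must be cited on its own; your reference to \cite{taubes1987gauge} merely as background for $h$ does not supply it. With this extra input (and only with it), the contractibility of $\hat A$ from Lemma~\ref{lem_hat_A_contractible} forces $a=0$ and hence $M_1\cong S^3$.
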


\begin{proof}
	By Part (1) of Lemma \ref{lem_Sigma_homology}, we know that $H_1(M_1)$ has no torsion. 
	Therefore, $M_1 = aP\# b(-P)$, where $a,b$ are non-negative integers, $P=\Sigma(2,3,5)$ is the Poincar\'e homology sphere, and $(-P)$ is the manifold $P$ with the reversed orientation. 
	Now we follow an argument from \cite{chodosh2024complete}.
	By Lemma \ref{lem_hat_A_contractible}, $M_1$ represents the trivial element of the homology cobordism group $\Theta_{\mathbb{Z}}^3$. Since there exist homomorphisms from $\Theta_{\mathbb{Z}}^3$ to $\mathbb{Z}$ that takes the Poincar\'e homology sphere to non-zero integers (some examples of such homomorphisms are the Froyshov invariant, the d-invariant, and Manolescu's $\beta$-invariant), we have $a=b$. By \cite[Theorem 1.7]{taubes1987gauge}, we know that $aP\#a(-P)$ does not bound contractible smooth 4-manifolds unless $a=0$. Hence we have $M_1\cong S^3$.
\end{proof}

By Lemma \ref{lem_Sigma_1_S3}, we have $M \cong  \#^k S^1\times S^2$.
Now we prove the second part of Proposition \ref{prop_topology_of_cut}.

\begin{lemm}
	Let $M, A,B$ be as above. Then $A$ is homeomorphic to $\natural^k D^2\times S^2$. 
\end{lemm}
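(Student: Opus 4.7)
The plan is to first use Freedman's topological $4$-manifold theory to identify $\hat A$ with a topological $4$-ball, then describe $A$ as the complement of cocore arcs in that ball, and finally identify that complement with $\natural^k D^2 \times S^2$.

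First, by Lemma \ref{lem_hat_A_contractible}, $\hat A$ is a compact contractible topological $4$-manifold with $\partial \hat A \cong S^3$. Doubling $\hat A$ along its boundary produces a closed simply connected homology $4$-sphere, hence a homotopy $S^4$, which is homeomorphic to $S^4$ by Freedman's resolution of the topological Poincar\'e conjecture in dimension four. It follows that $\hat A$ itself is homeomorphic to $D^4$.

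Second, I would realize $A$ as a complement inside $\hat A \cong D^4$. By construction $\hat A = A \cup (H_1 \sqcup \cdots \sqcup H_k)$, where each $H_i \cong D^3\times D^1$ is a $3$-handle attached along a tubular neighborhood $S^2\times D^1$ of the sphere $\{*\}\times S^2$ in the $i$-th $S^1\times S^2$ summand of $M$. Removing from $\hat A$ the open portion $\inte(D^3)\times D^1$ of $H_i$ that does not lie in $A$ is equivalent to removing an open regular neighborhood of the cocore arc $\alpha_i = \{0\}\times D^1$. Each $\alpha_i$ is a locally flat properly embedded arc in $\hat A$ with both endpoints on $\partial \hat A$, and the collection $\{\alpha_i\}$ is pairwise disjoint. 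Hence
\[
A \;\cong\; D^4 \setminus \nu(\alpha_1 \sqcup \cdots \sqcup \alpha_k).
\]

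Third, I would identify this complement with $\natural^k D^2\times S^2$. For the standard arc $\alpha = \{0\}\times[-1,1]$ in $D^4 = D^3\times[-1,1]$, a direct computation gives $D^4\setminus\nu(\alpha)\cong (D^3\setminus \inte(D^3_{1/2}))\times[-1,1]\cong S^2\times[0,1]\times[-1,1]\cong D^2\times S^2$. The general case then reduces to this model via the topological unknotting of properly embedded locally flat arcs in $D^4$: every such arc is topologically ambient isotopic to a standard arc inside a topologically embedded $4$-ball, and $k$ disjoint such arcs can be simultaneously separated into disjoint topological $4$-balls in $D^4$. The complement becomes the boundary connected sum $\natural^k D^2\times S^2$.

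The main obstacle I expect is Step $3$: the simultaneous topological unknotting and separation of the arcs. This step is unavailable in the smooth category, since smooth arcs in $D^4$ can be knotted, so it genuinely requires topological $4$-manifold theory (Freedman--Quinn). As an alternative route that sidesteps the arc-isotopy statement, one may appeal to the relative version of Freedman's classification of simply connected topological $4$-manifolds with prescribed boundary: the long exact sequence of $(\hat A,A)$ together with excision computes $H_2(A)\cong \ZZ^k$, and its generators are represented by the attaching $S^2$'s, which have trivial normal bundle in $\partial A$ and hence zero self- and pairwise intersection numbers. This data pins $A$ down uniquely as $\natural^k D^2\times S^2$.
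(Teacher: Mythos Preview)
Your approach is correct but genuinely different from the paper's. The paper does not pass through $\hat A$ at all in this lemma. Instead it glues the complementary handlebody $H_k = \natural^k D^3\times S^1$ to $A$ along $\partial A \cong \#^k S^1\times S^2$, obtaining a closed smooth $4$-manifold $Z$. Van Kampen gives $\pi_1(Z)=1$, and $H_2(Z)=0$ is proved by a nice geometric trick: any pair of surfaces in $Z$ can be isotoped off the $1$-complex core of $H_k$, hence into $A\subset S^4$, where their algebraic intersection must vanish. Freedman then gives $Z\cong S^4$ topologically. Since $H_k$ is a regular neighborhood of a wedge of circles, and embeddings of $1$-complexes in a simply connected smooth $4$-manifold are unique up to smooth isotopy, the complement of $H_k$ in $Z$ is diffeomorphic to $(\natural^k D^2\times S^2)\# Z$, hence homeomorphic to $\natural^k D^2\times S^2$. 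So the paper unknots a $1$-complex in $Z$; you unknot arcs in $\hat A\cong D^4$. These are dual pictures, and both rest on Freedman at the same essential point. Your route has the virtue of reusing Lemma~\ref{lem_hat_A_contractible} directly; the paper's route makes the unknotting step completely elementary (general position for a graph in a $4$-manifold) and exploits the ambient embedding $A\hookrightarrow S^4$ in a pleasant way.

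One correction: your worry in Step 3 is misplaced. Properly embedded arcs in $D^4$ have codimension $3$, so by general position they are \emph{smoothly} unknotted and can be smoothly separated into disjoint balls; there is no knotting phenomenon here even in the smooth category. The only reason you are forced into the topological category is that the identification $\hat A\cong D^4$ is a priori only a homeomorphism. Your alternative via Boyer's extension of Freedman's classification (simply connected, boundary $\#^k S^1\times S^2$, trivial intersection form, $H_2(\partial A)\to H_2(A)$ an isomorphism, vanishing Kirby--Siebenmann) is also valid and is perhaps the cleanest way to finish if you want to avoid any isotopy statement.
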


\begin{proof}
	By the previous results, $\partial A = M \cong \#^k S^1\times S^2$.
	Let $H_k = \natural^k D^3\times S^1$.
	Let $Z$ be the smooth manifold obtained by gluing $H_k$ with $A$ along a diffeomorphism of the boundaries. Since $\pi_1(\partial H_k) \to \pi_1(H_k)$ is surjective and $\pi_1(A)$ is trivial, the van Kampen theorem implies that $\pi_1(Z)=1$. 
	
	We claim that $H_2(Z)$ is trivial. Suppose $H_2(Z)\neq 0$, then there exists two closed oriented smooth 2-dimensional submanifolds $M_1$, $M_2$ of $Z$ such that $M_1$ and $M_2$ have a non-zero algebraic intersection number. Note that $H_k$ is a (closed) regular neighborhood of $\vee_k S^1$ in $Z$, so one can isotope $M_1$ and $M_2$ so that they are disjoint from $H_k$. Therefore, $M_1$ and $M_2$ are included in $A$ and have a non-zero algebraic intersection number. Since $A$ is embedded in $S^4$, this is impossible.
	
	By Freedman's classification theorem \cite[Theorem 1.5]{freedman1982topology}, $Z$ is homeomorphic to $S^4$. 
	
	There is a standard embedding of $H_k$ in $\mathbb{R}^4$ such that the closure of its complement is a punctured $\natural^k D^2\times S^2$.
	Since $Z$ is a smooth $4$--manifold and $H_k$ is a regular neighborhood of an embedded $1$--dimensional complex, and since $Z$ is simply connected, every smooth embedding of $H_k$ in $Z$ is smoothly isotopic to the standard embedding in a coordinate chart. Therefore, the complement of $\inte(H_k)$ in $Z$ is diffeomorphic to  $(\natural^k D^2\times S^2)\# Z$, which is homeomorphic to $\natural^k D^2\times S^2$. 
\end{proof}

This finishes the proof of Proposition \ref{prop_topology_of_cut}.
Now we carry out the proof of Theorem \ref{theo.main.biricci}.

\begin{proof}[Proof of Theorem \ref{theo.main.biricci}]
	Let $\hat M^3\subset (X^4,g)$ be a two-sided stable minimal hypersurface as given by Proposition \ref{prop.topology.extremal.minimal} and assume $\BiRic_g>0$. By Lemma \ref{lemm.reduction.to.connected} and the discussions above it, each connected component of $\iota(\hat M)$ separates $S^4$ into two connected components, and at least one of them is simply connected. Let $M$ be a connected component of $\hat M$. Denote by $\lambda_{\Ric}$ the smallest eigenvalue of the Ricci tensor of $M$. Then stability of $M$ implies that there exists a smooth function $W\ge -\lambda_{\Ric}$ such that
	\begin{equation}\label{eq.lambda.ricci.bound}
		\lambda_1(-\Delta - W) \ge \lambda>0
	\end{equation}
	on $M$, here $\lambda$ is a positive lower bound of the BiRicci curvature of $g$. Since the universal cover of $(M,g|_M)$ also satisfies \eqref{eq.lambda.ricci.bound}, \cite[Section 2]{ShenYe} implies that this universal cover has diameter bounded by $c/\sqrt{\lambda}$ for some constant $c>0$. In particular, $\pi_1(M)$ is finite and hence $M$ is diffeomorphic to $S^3/\Gamma$ for some discrete subgroup $\Gamma$ of $SO(4)$. Thus, Lemma \ref{lem_Sigma_homology} implies that $H_1(M)$ is torsion free, and hence $M$ is diffeomorphic to either $S^3$ or the Poincar\'e homology sphere. However, the latter cannot happen by Proposition \ref{prop_topology_of_cut}. 
\end{proof}

\section{A black hole topology theorem}\label{section.black.hole}
In this section we apply the proof of Theorem \ref{theo.main.psc} and Theorem \ref{theo.main.biricci} to obtain topological control of apparent horizons (quasilocal black hole boundaries) of certain time-symmetric $4$-dimensional initial data to the Einstein equations satisfying the dominant energy condition. To see its relations to our proof of Theorem \ref{theo.main.psc}, recall that such an initial data $(X^{n+1},g)$ necessarily satisfies that $R_g\ge 0$, and if $M^n=\partial X^{n+1}$ is an apparent horizon, then $M^n\subset X^{n+1}$ and $X^{n+1}$ contains no interior minimal hypersurfaces. By \cite{CaiGalloway2001,Galloway2008,GallowaySchoen2006}, $M^n$ is a stable minimal hypersurface and is necessarily Yamabe positive (that is, it is conformal to a PSC manifold). We are ready to prove Theorem \ref{theo.gr}.

\begin{proof}[Proof of Theorem \ref{theo.gr}]
	Let $E$ be an asymptotically flat end of $(X^4,g)$ and suppose $M$ is the apparent horizon in $E$. Then $M$ is Yamabe positive, and thus we have that
	\[M = \left(\#_i (S^3/\Gamma_i) \right)\# \left(\#^k S^2\times S^1\right). \]
	Since $E$ is asymptotically flat, it admits a mean convex foliation of three spheres outside a compact subset. Let $Y$ be a leaf in this foliation, and denote by $X'$ the region bounded between $M$ and $Y$. Note that $X'$ satisfies the assumptions of Theorem \ref{theo.main.psc}: it is a subset of $X$, which is assumed to have an embedding $\iota$ into $S^4$, and moreover  its complement in $S^4$ contains a homeomorphic $D^4$ (bounded by $\iota(Y)$ by the locally flat Schoenflies theorem).
	Since by assumption, $X'$ contains no interior stable minimal hypersurface that is homologous to $Y$, the proof of Proposition \ref{prop.topology.extremal.minimal} implies that $i_*:\pi_1(Y)\to \pi_1(X')$ is surjective.

	Consequently, we may apply the results in Section \ref{section.space.forms} to conclude that each connected component of $M$ is diffeomorphic to $S^3$ or a connected sum of $S^2\times S^1$'s.
\end{proof}

We remark that, other than assuming that $X$ is diffeomorphic to the interior of a manifold satisfying \ref{topo.assumption.on.domain}, the proof above is quite flexible. For instance, it suffices to assume that $E$ admits a mean convex foliation by mean convex three spheres outside a compact subset. In particular, any metric on $E$ that is $C^1$ asypmtotic to the Euclidean metric admits such a foliation.

Moreover, we may extend this result to complete noncompact manifolds $X^4$ with an end $E$ admitting a mean convex foliation of its infinity by $Y$, such that $\iota: X\to S^4$ satisfies that $S^4\setminus \iota(X)$ has a simply connected component bounded by $Y$. For instance, for all $k\in \ZZ_{>0}$, $Y = \#^k S^2\times S^1$ bounds the simply connected region $\natural^k S^2\times D^2$. Such manifolds appear as certain ALF or ALG gravitational instantons. An analogous argument proves that the outermost minimal hypersurface on this end is diffeomorphic to a disjoint union of $S^3$ or connected sums of $S^2\times S^1$. We will see an explicit example of such a manifold in Section \ref{subsection.DL.examples}.

\section{Extensions, examples and further questions}\label{section.example}

This section is devoted to several related discussions to our main results. We describe an extension of our results to construct locally minimizing hypersurfaces with controlled topology in a homeomorphic $B^4$ with an $S^2$ boundary. We also present a few examples and counterexamples on our main topological assumption \ref{topo.assumption.on.domain}: we construct a PSC embedding of the connected sums of lens spaces into a PSC $S^4$, illustrating the potential topological complexities of stable minimal hypersurfaces; we also discuss in detail the Dahl-Larsson construction \cite{DahlLarsson2019} of $S^2\times S^1$ horizons.

\subsection{Topology of the solution to Plateau problems in $D^4$}\label{subsection.plateau.minimal.hypersurface}

Consider a smooth homeomorphic closed 4-ball $D^4$ equipped with a Riemannian metric $g$. Let $\Sigma^2\subset S^3=\partial D^4$ be an embedded $S^2$. Assuming that $\partial D^4$ is weakly mean convex, we may use it as a barrier and solve the Plateau problem with boundary $\Sigma$. It is a very interesting question which $3$-manifolds with boundary may appear as such a Plateau solution - we believe that this is unknown even for the standard round ball in $\RR^4$ (but with an arbitrary $\Sigma = S^2$ in its boundary). We apply our results in Section \ref{section.minimal.hypersurface} to obtain some preliminary topology control of at least one such minimal hypersurface.

\begin{theo}\label{theo.topo.plateau.minimal}
	Let $X$ be a compact smooth manifold that is homeomorphic to $D^4$, $\Sigma^2\subset\partial X$ is an $S^2$. Suppose $g$ is a Riemannian metric on $X$ such that $\partial X$ is $g$-weakly mean convex. Then $\Sigma$ bounds a stable minimal hypersurface $M$ satisfying $\Tor(H_1(M)) = 0$.
\end{theo}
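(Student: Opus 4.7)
The plan is to mirror the strategy of Theorem~\ref{theo.main.psc}: produce a Plateau-type stable minimizer $M$ with $\partial M = \Sigma$ whose cobounded region in $X$ is topologically simple, and then translate this into a constraint on the topology of $M$ via an embedding into $S^4$. Since $X$ is a smooth manifold homeomorphic to $D^4$, I would first cap it off by a smooth $4$-disk along $\partial X$ to get a smooth manifold homeomorphic to $S^4$; the resulting inclusion $\iota:X\hookrightarrow S^4$ exhibits $\partial X\cong S^3$, inside which the locally flat $S^2\cong\Sigma$ separates two $3$-disks $D_+$ and $D_-$. Using $\partial X$ as a weakly mean convex barrier and $D_+$ as a competitor, the minimization
\[
\inf\{\cH^3(C) : C \text{ is a 3-chain in } X \text{ with } \partial C = \Sigma\}
\]
is realized (by Federer--Fleming together with standard boundary regularity near the smooth datum $\Sigma$) by a smooth embedded stable minimal hypersurface $M\subset X$ with $\partial M=\Sigma$ and $\cH^3(M)\le\cH^3(D_+)$. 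Setting
\[
\cS = \{M\subset X : M \text{ is stable minimal},\ \partial M=\Sigma,\ \cH^3(M)\le \cH^3(D_+)\},
\]
interior Schoen--Simon estimates plus boundary regularity render $\cS$ compact in $C^\infty$, and each $M\in\cS$ cobounds with $D_+$ a region $\Omega_M\subset X$ (this uses $H_3(X)=0$). I would then pick $M\in\cS$ minimizing $\cH^4(\Omega_M)$.

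The next step is to prove $\pi_1(\Omega_M)=1$; this is the Plateau analog of Proposition~\ref{prop.topology.extremal.minimal}, with $D_+$ playing the role of the boundary component $Y$. Since $\pi_1(D_+)=1$, the desired conclusion amounts to surjectivity of $i_*:\pi_1(D_+)\to\pi_1(\Omega_M)$. Arguing by contradiction, suppose $\pi_1(\Omega_M)\ne 1$ and pass to a nontrivial connected covering $\pi:\tilde\Omega_M\to\Omega_M$. Because $D_+$ is simply connected, $\pi^{-1}(D_+)=\bigsqcup_i\tilde D_+^{(i)}$ is a disjoint union of copies of $D_+$, with boundaries $\tilde\Sigma^{(i)}\cong\Sigma$. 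Fix a lift $\tilde D_+^{(0)}$ and minimize $\cH^3$ among integer $3$-currents in $\tilde\Omega_M$ with boundary $\tilde\Sigma^{(0)}$ in the relative class $[\tilde D_+^{(0)}]$. A relative version of Lemma~\ref{lem_intersection_nonzero}, combined with the geodesic-ball truncation and mean-convex perturbation device from the proof of Proposition~\ref{prop.topology.extremal.minimal}, should produce a compact area-minimizer $\tilde N$ with $\partial\tilde N=\tilde\Sigma^{(0)}$, $\cH^3(\tilde N)\le\cH^3(D_+)$, and $\tilde N$ not contained entirely in $\partial\tilde\Omega_M$. The image $\pi(\tilde N)$ is a (possibly immersed) stable minimal hypersurface in $\Omega_M$ with boundary $\Sigma$; the immersed-barrier endgame from the proof of Proposition~\ref{prop.topology.extremal.minimal} upgrades it to an embedded $M'\in\cS$ with $\cH^4(\Omega_{M'})<\cH^4(\Omega_M)$, contradicting extremality. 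The hard part will be this covering-space existence step: extending Lemma~\ref{lem_intersection_nonzero} to relative cycles with prescribed boundary, and ruling out the degenerate possibility that the minimizer in $[\tilde D_+^{(0)}]$ is merely a lift of $M$ bounded by $\tilde\Sigma^{(0)}$ (which already has area $\cH^3(M)\le\cH^3(D_+)$), so that its projection furnishes a strict volume improvement.

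With $\pi_1(\Omega_M)=1$ in hand, the closed $3$-manifold $\hat M := M\cup_\Sigma D_+$ (with the $\Sigma$-corner smoothed) is locally flatly embedded in $S^4$ via $\iota$, and $\iota(\Omega_M)$ is a simply connected component of $S^4\setminus\iota(\hat M)$. Lemma~\ref{lem_Sigma_homology}(1) now yields $\Tor(H_1(\hat M))=0$. Finally, applying Mayer--Vietoris to the decomposition $\hat M=M\cup_\Sigma D_+$, using $H_1(\Sigma)=H_1(D_+)=0$ and the injectivity of $H_0(\Sigma)\to H_0(M)\oplus H_0(D_+)$, gives $H_1(\hat M)\cong H_1(M)$. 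Hence $\Tor(H_1(M))=0$, as asserted.
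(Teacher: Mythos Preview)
Your approach is essentially the same as the paper's, and is correct. The one place you diverge is in overcomplicating the covering-space step. Since $D_+$ is simply connected, the subgroup $i_*(\pi_1(D_+))\subset\pi_1(\Omega_M)$ is trivial, so the associated cover $\tilde\Omega_M\to\Omega_M$ is the \emph{universal} cover, which is normal; and since $\Sigma$ is connected, every $M\in\cS$ is connected. These are precisely the hypotheses of Remark~\ref{remark.simple.case.white}: in this situation White's original argument (translating a minimizing sequence by deck transformations to meet a fixed compact set) suffices, and there is no need to develop a relative version of Lemma~\ref{lem_intersection_nonzero} or to worry about disconnected minimizers. The paper's proof explicitly invokes this shortcut. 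Your concern about the ``degenerate possibility'' that the cover minimizer is merely a lift of $M$ is also handled automatically: such a lift projects to $M$ itself, so if it were the minimizer one gets no contradiction only when $\pi_1(\Omega_M)=1$ already. Otherwise the projection of a genuinely distinct minimizer (or the strong maximum principle against $\pi^{-1}(M)$) forces a strict volume drop as in the closed case. The endgame---capping $M$ with $D_+$ to a locally flat closed $3$-manifold in a homeomorphic $S^4$, applying Lemma~\ref{lem_Sigma_homology}(1), and reading off $H_1(M)\cong H_1(M\cup D_+)$ via Mayer--Vietoris---matches the paper exactly.
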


\begin{proof}
	We proceed as in \cite{White1992topology}. It is known that $\partial D^4\setminus \Sigma$ has two components $Y_1, Y_2$ and each $Y_i$, $i=1,2$, is diffeomorphic to $D^3$. Consider the space of stable minimal hypersurfaces $\cS$ in $(X,g)$ with boundary $\Sigma$ and $\cH^3$-volume bounded by $\cH^3(\partial X)$. Then $\cS$ is compact by standard curvature estimates. For an element $M\in \cS$, let $\Omega_M\subset X$ denote the region bounded by $M$ and $Y_1$. Then 
	\[\min\{\cH^4(\Omega_M): M\in \cS \} \]
	is achieved by $N\in \cS$. 
	
	By a similar proof as in Proposition \ref{prop.topology.extremal.minimal} (in this case, we may simply apply White's proof \cite{White1992topology} since $Y_1$ is simply connected and each $M\in \cS$
	 is connected, see Remark \ref{remark.simple.case.white}), we conclude that $\Omega_N$ is simply connected. Thus, applying Lemma \ref{lem_Sigma_homology} to $\partial \Omega_N = M\cup Y_1$, we conclude that $H_1(M)\cong H_1( M\cup Y_1)$ is torsion free.
\end{proof}

Inspired by Theorem \ref{theo.main.psc} and Theorem \ref{theo.main.biricci}, it would be very interesting to further narrow down the possibilities of the topology of $M$, perhaps with some additional curvature assumptions on $g$.

\begin{quest}
	Under some additional curvature assumptions on $g$, prove that $M$ has more restrictions on its topology. For example, is it true that if $g$ has nonnegative Ricci curvature and $\partial X$ is $g$-convex, we can find a Plateau minimal hypersurface $M$ that is diffeomorphic to $D^3$ or $\left(\#^k (S^2\times S^1)\right)\setminus D^3$?
\end{quest}

\subsection{A PSC embedding of connected sums of lens spaces}\label{subsection.connected.sum.lens}

Given a PSC metric $g$ on $S^4$, we are interested in which $3$-manifolds may appear as a stable minimal hypersurface in $(S^4,g)$. If $M^3\subset (S^4,g)$ is a stable minimal hypersurface, then necessarily $M = (S^3/\Gamma_i) \# \left(\#^k (S^2\times S^1)\right)$. On the other hand, it is known that not all $3$-manifolds in this form admits a smooth embedding into $S^4$. One necessary condition that dates back to Hantzsche \cite{Hantzsche1938} from 1938 states that for such an embedding to exist, $\Tor(H_1(M)) = G\oplus G$ for some abelian group $G$ (we again refer the readers to the survey article by Hillman \cite{hillman2024locally} for the collection of classical results of the embedding problem). The most basic example of such a $3$-manifold with nontrivial $\Tor(H_1(M))$ is the connected sum of two lens spaces, $L(p,q)\#-L(p,q)$ when $p$ is odd, as constructed by Zeeman \cite{zeeman1965twisting}. Below we show that such manifold does admit a PSC embedding into a PSC $S^4$.

Let $S^3 = \{(z,w)\in \mathbb{C}^2 \mid |z|^2 + |w|^2 =1\}$. Let $p\neq 0 $ and $p,q$ be coprime.  Consider the action of $\mathbb{Z}/p\mathbb{Z}$ on $S^3$ generated by 
\[
f: (z,w)\mapsto (e^{\frac{1}{p}\cdot {2\pi i}}z, e^{\frac{q}{p}\cdot 2\pi i}w).
\]
Then the action is free, and the quotient manifold is the lens space $L(p,q)$. Let $\sim$ denote the equivalence relation on $S^3$ induced by the action by $f$. Denote by $\tau: S^3\to S^3$ the conjugation map defined by $\tau(z,w) = (\bar z,\bar w)$.

The next lemma reinterprets a construction of Schubert \cite{schubert1956knoten} so that we can keep track of the curvature in Zeeman's construction. 
\begin{lemm}
	\label{lem_conj_on_L(p,q)}
	$\tau$ induces an involution on $L(p,q)$. The quotient space with respect to this involution is $S^3$, and the quotient map is a double branched cover. When $p$ is odd,  the branching locus is a smooth knot. 
\end{lemm}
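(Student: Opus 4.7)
The plan is to realize $L(p,q)/\bar\tau$ as $S^3/G$ for the dihedral group $G = \langle f,\tau\rangle \subset O(4)$ of order $2p$, identify this quotient topologically with $S^3$ via Schubert's description of 2-bridge knots \cite{schubert1956knoten}, and then analyze the $\bar\tau$-fixed locus to see that it is a single smoothly embedded circle when $p$ is odd.

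First I would compute directly
\[
\tau \circ f \circ \tau^{-1}(z,w) = \tau\bigl(e^{2\pi i/p}\bar z,\, e^{2\pi iq/p}\bar w\bigr) = \bigl(e^{-2\pi i/p}z,\, e^{-2\pi iq/p}w\bigr) = f^{-1}(z,w),
\]
so $\tau$ normalizes $\langle f\rangle$ and descends to an involution $\bar\tau$ on $L(p,q) = S^3/\langle f\rangle$. This proves the first assertion and identifies $L(p,q)/\bar\tau$ with $S^3/G$, where $G$ is dihedral of order $2p$. To identify the quotient topologically with $S^3$, I would invoke Schubert's classical result: the double branched cover of $S^3$ along the 2-bridge knot (or link) $K(p,q)$ is diffeomorphic to $L(p,q)$, and the corresponding covering involution is conjugate to $\bar\tau$; this yields $L(p,q)/\bar\tau \cong S^3$ with branch locus $K(p,q)$. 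A self-contained alternative is a direct Heegaard-splitting argument on the $G$-invariant decomposition $S^3 = V_+\cup V_-$, where $V_+ = \{|z|\geq 1/\sqrt 2\}$ and $V_- = \{|z|\leq 1/\sqrt 2\}$: each quotient $V_\pm/G$ is a 3-ball with a trivial two-string tangle as the image of the fixed set, and the two balls glue to give $S^3$.

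For the branching locus, I would analyze the fixed set of $\bar\tau$ on $L(p,q)$ by lifting: a point $(z,w) \in S^3$ projects to a $\bar\tau$-fixed point in $L(p,q)$ iff $\tau(z,w) = f^k(z,w)$ for some $k \in \{0,1,\ldots,p-1\}$. Taking arguments, this forces $z \in e^{-i\pi k/p}\mathbb{R}$ and $w \in e^{-i\pi kq/p}\mathbb{R}$, so for each $k$ the solution set is the great circle
\[
C_k = \bigl\{(re^{-i\pi k/p},\, se^{-i\pi kq/p}) : r,s \in \mathbb{R},\ r^2+s^2=1\bigr\}.
\]
Using $\tau f = f^{-1}\tau$, a short calculation gives $f(C_k) = C_{k-2}$, so $\langle f\rangle$ permutes the $p$ circles $C_0,\dots,C_{p-1}$ by the shift $k \mapsto k-2$. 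When $p$ is odd, $\gcd(2,p) = 1$ and this shift is transitive, so the $p$ circles descend to a single smooth embedded circle in $L(p,q)$. Since $\bar\tau$ fixes this circle pointwise, the quotient map embeds it smoothly into $L(p,q)/\bar\tau \cong S^3$ as a knot, which is the branching locus.

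The main obstacle is the middle identification $L(p,q)/\bar\tau \cong S^3$. Citing Schubert short-cuts this via the 2-bridge classification, but a self-contained route requires carefully tracking how the dihedral action quotients a solid torus to a 3-ball with a trivial tangle, and then verifying that the two resulting 3-balls glue back to $S^3$ rather than another lens space --- this is the most delicate piece of bookkeeping in the argument.
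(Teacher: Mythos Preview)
Your proposal is correct. Both you and the paper verify $\tau f = f^{-1}\tau$ to get the induced involution, and both identify $L(p,q)/\bar\tau$ with $S^3$ via the genus-one Heegaard splitting $\{|z|\gtrless|w|\}$: the paper carries out precisely the ``delicate bookkeeping'' you flag, writing down an explicit diffeomorphism $\varphi: T_1/\langle f\rangle \to S^1\times D^2$ under which $\bar\tau$ becomes the product of two reflections, so each solid-torus quotient is a $D^3$. Your Schubert citation is a legitimate shortcut here, and in fact the paper frames the lemma as a reinterpretation of Schubert's construction.

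The knot argument is where you diverge most usefully. The paper tracks the fixed set as four explicit arcs (two in each solid torus), lists their eight endpoints in $S^3$, and then checks by hand---applying specific powers of $f$ and using $2p\mid qq'-1$---that the endpoints match up into a single closed curve when $p$ is odd. Your argument is cleaner: the lift of the $\bar\tau$-fixed set to $S^3$ is the union of $p$ great circles $C_k$, and $f(C_k)=C_{k-2}$ gives a transitive $\langle f\rangle$-orbit precisely when $\gcd(2,p)=1$, so the image in $L(p,q)$ is a single embedded circle. This bypasses all the endpoint matching and makes transparent why odd $p$ is the right hypothesis (for even $p$ the orbit splits into two, yielding a two-component link).
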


\begin{proof}
	It is straightforward to check that $f\circ \tau = \tau \circ f^{-1}$, so $\tau$ induces an involution on $(S^3/\sim) = L(p,q)$. 
	
	Let $T_1 = \{(z,w)\in S^3\mid |z|\ge |w|\}$,  $T_2 = \{(z,w)\in S^3\mid |w|\ge |z|\}$. Then $f(T_1) = T_1$, $f(T_2) = T_2$. A fundamental domain of the action of $f$ on $T_1$ is 
	\[
	F_1 := \{(z,w)\in S^3 \mid arg(z) \in [0,2\pi/p], |z|\ge |w|\} \cong [0,2\pi/p] \times D^2,
	\]
	where $D^2$ is identified with the unit disk in $\mathbb{C}$, and the above diffeomorphism is defined by 
	\[
	(z,w) \mapsto (arg(z), \sqrt{2} w).
	\]
	The gluing map on $\partial F_1 \cong \partial [0,2\pi/p] \times D^2$ is given by 
	\[
	(0,x) \sim (2\pi/p, e^{\frac{q}{p}\cdot 2\pi i}x)
	\]
	for all $x\in D^2$, so $T_1/\sim$ is a solid torus $S^1\times D^2$. 
	
	We write down a diffeomorphism between $T_1/\sim$ and $S^1\times D^2$ explicitly. 
	Identify $S^1$ with the unit circle in $\mathbb{C}$. Let $\bar{\varphi}: F_1 \to S^1\times D^2$ be defined by 
	\[
	\bar \varphi(\theta, x) = (e^{i p \theta}, e^{-i q \theta} x),
	\]
	then $\bar\varphi$ induces a diffeomorphism $\varphi: (T_1/
	\sim) \to S^1\times D^2$. 
	
	The map $\tau$ induces an involution on $T_1/\sim$. The map is described on the fundamental domain as 
	\begin{align*}
	\tau |_{T_1/\sim} : [0,2\pi/p]\times D^2 / \sim & \to [0,2\pi/p]\times D^2 /\sim \\
	(\theta, x) & \mapsto (2\pi/p - \theta, e^{\frac{q}{p}\cdot 2\pi i}\bar{x}).  
	\end{align*}
	The fixed point set of $\tau |_{T_1/\sim}$ consists of two arcs, represented in the fundamental domain by
	\begin{equation}
	\label{eqn_branching_arcs_T1}
	\{\pi/p\}\times \{re^{\frac{q}{p}\cdot \pi i}\mid r\in [-1,1]\} \quad\text{and}\quad \{0\}\times \{r\mid r\in [-1,1]\}.
	\end{equation}
	
	Note that 
	\begin{align*}
	\varphi\circ (\tau|_{T_1/\sim}) \circ \varphi^{-1} (e^{ip\theta}, e^{-i q \theta} x) 
	& = (e^{ip(2\pi/p - \theta)}, e^{-iq((2\pi/p - \theta)}  e^{\frac{q}{p}\cdot 2\pi i}\bar{x}) \\
	& = (e^{-ip\theta}, e^{i q \theta} \bar{x}).
	\end{align*}
	So the action of $\varphi\circ (\tau|_{T_1/\sim}) \circ \varphi^{-1}$ on $S^1\times D^2$ is the product of two  reflections on $S^1$ and $D^2$. Therefore, the quotient space is homeomorphic to $D^3$ and the quotient map is a double branched cover.
	
	Similarly, the involution $\tau$ defines a double branched cover from $T_2/\sim$ to $D^3$. As a result, the quotient map $L(p,q)\to L(p,q)/\tau$ is a double branched cover and we have $L(p,q)/\tau\cong S^3$.
	
	The branching locus is the image of four arcs on $L(p,q)$, which are given by \eqref{eqn_branching_arcs_T1} and a similar formula on $T_2/\sim$.  To write down the formula for the branching locus on $T_2/\sim$, let 
	\[
	F_2 := \{(z,w)\in S^3 \mid arg(w) \in [0,2\pi/p], |w|\ge |z|\} \cong D^2\times [0,2\pi/p],
	\]
	be a fundamental domain of $T_2/\sim$, where the diffeomorphism is given by $(z,w)\mapsto (\sqrt{2} z, \arg(w))$. Let $q'$ be an integer such that $p\mid qq'-1$. Then the branching locus on $T_2/\sim$ are given by the following two arcs on $F_2$:
	\begin{equation}
	\label{eqn_branching_arcs_T2}
	\{re^{\frac{q'}{p}\cdot \pi i}\mid r\in [-1,1]\} \times \{\pi/p\}  \quad\text{and}\quad  \{r\mid r\in [-1,1]\}\times \{0\}.
	\end{equation}
	The endpoints of the four arcs have the following preimages in $S^3$:
	\begin{enumerate}
		\item The endpoints of $\{\pi/p\}\times \{re^{\frac{q}{p}\cdot \pi i}\mid r\in [-1,1]\}$ in \eqref{eqn_branching_arcs_T1} lift to $\frac{\sqrt{2}}{2} ( e^{\pi i/p}, \pm e^{\frac{q}{p}\cdot \pi i})$.
		\item The endpoints of $\{0\}\times \{r\mid r\in [-1,1]\}$ in \eqref{eqn_branching_arcs_T1} lift to $\frac{\sqrt{2}}{2} ( 1, \pm 1)$.
		\item The endpoints of $\{re^{\frac{q'}{p}\cdot \pi i}\mid r\in [-1,1]\} \times \{\pi/p\} $ in \eqref{eqn_branching_arcs_T2} lift to $\frac{\sqrt{2}}{2} ( \pm e^{\frac{q'}{p}\cdot \pi i}, e^{\pi i/p})$.
		\item The endpoints of $\{r\mid r\in [-1,1]\}\times \{0\}$ in \eqref{eqn_branching_arcs_T2} lift to $\frac{\sqrt{2}}{2} (\pm 1, 1)$.
	\end{enumerate}
	Since $p$ is odd, we may assume without loss of generality that both $q'$ and $q$ are odd. Then $2p\mid qq'-1$. In the following, we abuse notation and extend the action of $f$ to $\mathbb{C}^2$ by the same formula.  Then
	\[
	f^{(q'-1)/2} ( e^{\pi i/p},  e^{\frac{q}{p}\cdot \pi i}) = (  e^{\frac{q'}{p}\cdot \pi i}, e^{\frac{q}{p}\cdot \pi i}e^{\frac{q'-1}{2} \frac{q}{p}\cdot 2\pi i}) =  (e^{\frac{q'}{p}\cdot \pi i}, e^{\pi i/p}),
	\]
	\[
	f^{(p-1)/2} ( e^{\pi i/p},  -e^{\frac{q}{p}\cdot \pi i}) = (  e^{\frac{p}{p}\cdot \pi i},- e^{\frac{q}{p}\cdot \pi i}e^{\frac{p-1}{2} \frac{q}{p}\cdot 2\pi i}) =  (-1, 1),
	\]
	\[
	f^{(p+q')/2} (1,-1) = (  e^{\frac{p+q'}{p}\cdot \pi i},- e^{\frac{q(p+q')}{p}\cdot \pi i}) =  ( -e^{\frac{q'}{p}\cdot \pi i}, e^{\pi i/p}).
	\]
	As a result, the four arcs above connect to form a smooth knot in $L(p,q)$. 
\end{proof}

We are now ready to state our main construction.  We show that Zeeman's construction can be performed preserving the PSC conditions.

\begin{prop}\label{prop.psc.embedding.of.lens}
	Let $p, q$ be coprime integers and $p$ is odd. There exists an embedding of $M=L(p,q)\# - L(p,q)$ into  $S^4$ equipped with a Riemannian metric $g$, such that $g$ has positive scalar curvature, and the induced metric $g|_M$ has positive scalar curvature.
\end{prop}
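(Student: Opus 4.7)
The plan is to combine Zeeman's topological construction with an explicit PSC metric built from round lens spaces.

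First I would establish the topological embedding. By Lemma~\ref{lem_conj_on_L(p,q)}, $L(p,q)$ is the double branched cover of $S^3$ along a smooth knot $K$, so $M = L(p,q)\# -L(p,q)$ is the double branched cover of $S^3$ along $K' := K \# \bar K$, where $\bar K$ denotes the mirror image of $K$. Since $K'$ admits a mirror symmetry exchanging its two summands, it bounds mirror-image ribbon disks $D_\pm \subset D^4_\pm$ in the two halves of $S^4 = D^4_+\cup_{S^3} D^4_-$, which assemble into a smoothly unknotted $2$-sphere $\Sigma := D_+\cup_{K'} D_- \subset S^4$ (the unknottedness of $\Sigma$ is the content of Zeeman's construction). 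The double branched cover $\pi : \wt{S^4}\to S^4$ along the unknotted $\Sigma$ is then diffeomorphic to $S^4$, and $\pi^{-1}(S^3)$, being the double branched cover of $S^3$ along $K'$, equals $M$. This produces the desired smooth embedding $M\hookrightarrow \wt{S^4}\cong S^4$.

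Next I would equip $\wt{S^4}=S^4$ with a PSC metric $g$ whose restriction to $M$ is also PSC, built in three pieces. (i) A PSC metric $g_M$ on $M$, obtained by applying the Gromov--Lawson connect-sum surgery to the round metrics on $L(p,q) = S^3/(\ZZ/p)$ and $-L(p,q)$. (ii) The product metric $g_M+dt^2$ on a tubular neighborhood $M\times(-\eps,\eps)\subset \wt{S^4}$, which is automatically PSC. (iii) A PSC metric on each of the two complementary caps $V_\pm \subset \wt{S^4}$ that matches the product metric along the boundary. Each $V_\pm$ is diffeomorphic to the double branched cover of $D^4_\pm$ along the ribbon disk $D_\pm$, a simply connected $4$-manifold with boundary $M$ whose handle decomposition can be read off from a ribbon presentation of $K \# \bar K$. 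Applying the Gromov--Lawson--Schoen--Yau surgery theorem together with its boundary variant produces the required PSC metric on $V_\pm$ with totally geodesic boundary isometric to $(M, g_M)$; gluing then yields $g$.

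The main obstacle will be step (iii): producing the PSC metrics on the caps $V_\pm$ with prescribed boundary geometry requires both a concrete handle analysis of $V_\pm$ and a boundary-sensitive PSC surgery argument. A potentially cleaner alternative would be to build $g$ as the pullback under $\pi$ of a metric $\bar g$ on the base $S^4$ with a cone singularity of angle $\pi$ along $\Sigma$. The pullback of such a cone metric under the double branched cover is automatically smooth (locally a cone of angle $\pi$ doubles to a smooth cone of angle $2\pi$), and $\bar g$ can be obtained from the round $S^4$ metric by a local deformation near $\Sigma$ that reduces the cone angle to $\pi$; since shrinking cone angles below $2\pi$ only adds positive curvature distributionally along $\Sigma$, this preserves PSC on the base. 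The restriction to $M$ is then the pullback of the induced cone metric on the equatorial $S^3$ with cone angle $\pi$ along $K\# \bar K$, hence smooth and PSC by the same argument. I would pursue this cone-metric approach first, falling back on the explicit handle construction if the cone-metric PSC estimates prove subtle.
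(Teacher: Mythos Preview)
Your topological setup is essentially correct but needs one clarification: what you need is that $K' = K\#\bar K$ is \emph{doubly slice}, i.e.\ that there exist slice disks $D_\pm$ whose union is an \emph{unknotted} $2$--sphere. This is a theorem of Sumners (proved via Zeeman's $1$--twist spin), but it is not automatic from the phrase ``mirror-image ribbon disks'': the most natural ribbon disk for $K\#\bar K$, doubled with its mirror, gives the Artin ($0$--twist) spin of $K$, which is knotted whenever $K$ is nontrivial. So ``the unknottedness of $\Sigma$ is the content of Zeeman's construction'' is true, but only for a specific pair of disks, not arbitrary mirror ribbon disks.

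The real gap is in the metric step, and both of your routes hit genuine obstacles. In route (iii)/(A), each cap $V_\pm$ is the double branched cover of $D^4$ along a ribbon disk; its handle decomposition contains $2$--handles (codimension $2$), which lie outside the Gromov--Lawson surgery range, so you cannot simply propagate PSC across them with prescribed boundary metric. In the cone-metric route (B), the issue is that your unknotted $\Sigma$ meets the equatorial $S^3$ in the \emph{nontrivial} knot $K'$, so $\Sigma$ is not the round $S^2$ in the round $S^4$. If you isotope $\Sigma$ to be round and build the cone metric there, the hypersurface $S^3$ containing $K'$ becomes some complicated (non-round) hypersurface, and there is no reason the induced metric on it---or on its branched double cover $M$---is PSC. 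Restricting an ambient PSC metric to a hypersurface does not give PSC in general, so ``PSC by the same argument'' is unjustified.

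The paper avoids all of this by working directly with Zeeman's open-book description rather than the branched-cover picture. It forms the mapping torus $M_\tau$ of the covering involution $\bar\tau$ on $L(p,q)\setminus D_\rho(x)$ (after a Gromov--Lawson deformation making the metric a $\bar\tau$--equivariant product near $\partial D_\rho$); the product metric $g_1 + dt^2$ on $M_\tau$ is automatically PSC. Zeeman's theorem then says $M_\tau \cup_\varphi (S^2\times D^2)\cong S^4$, and the $S^2\times D^2$ piece also carries an obvious PSC product metric matching along the boundary. A single fiber $\{t\}\times(L(p,q)\setminus D_\rho)$ sits totally geodesically in $M_\tau$, and the boundary of its tubular neighborhood is $L(p,q)\# -L(p,q)$ with PSC induced metric by the standard Gromov--Lawson doubling argument. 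This route never needs to extend PSC over an unknown $4$--manifold or control the induced metric on a complicated hypersurface.
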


\begin{proof}
	Let $N= L(p,q)$ be a lens space with $p$ odd, and let $\bar{\tau}:L(p,q)\to L(p,q)$ be the involution given by Lemma \ref{lem_conj_on_L(p,q)}. Equip $L(p,q)$ with the standard round metric $g_0$ inherited from the round $S^3$. Lemma \ref{lem_conj_on_L(p,q)} implies that $\bar\tau$ is an isometry of $g_0$. Let $x\in L(p,q)$ be a point on the branching locus of $\tau$, and for small $\rho>0$, $D_\rho(x)$ be the geodesic ball at $x$. We now fix $\rho>0$ sufficiently small, so we may apply the classical result of Gromov-Lawson \cite{GromovLawson80} (the point $\{x\}$ has codimension $3$ in $N$), and deform the metric $g_0$ in $D_\rho\setminus\{x\}$ to another PSC metric $g_1$, such that $g_1$ is a product metric in $D_{\rho/2}\setminus \{x\}$, while keeping $\tau$ an isometry of $g_1$. Let
	\[M_\tau = \left(N \setminus D_\rho(x)\right)\times [0,1] / (y,0)\sim (\tau (y),1)\]
	be the mapping torus of $\tau$ on $N\setminus D_\rho(x)$. The product metric $g_1+ dt^2$ on $(N\setminus D_\rho(x))\times [0,1]$ induces a metric $\hat g$ with positive scalar curvature and is product near  $\partial M_\tau=S^2\times S^1$. Thus, we may glue onto $M_\tau$ a  $S^2\times D^2$ with positive scalar curvature, equipped with a product metric near its boundary. The gluing is defined by a fiber preserving diffeomorphism $\varphi$ of $S^2\times S^1$. A classical result of Zeeman\footnote{In fact, Zeeman's theorem states that the same result holds for every cyclic branched cover of $S^3$ along a knot.} \cite{zeeman1965twisting} states that by suitably choosing $\varphi$, the result
	\[M_\tau \cup_{\varphi}( S^2\times D^2)\]
	is diffeomorphic to the standard $S^4$. Note that the metrics glue smoothly into $g$ with positive scalar curvature.
	
	Taking any $t\in [0,1]$ in the $[0,1]$--coordinate of $M_\tau$, we obtain a smooth PSC embedding of
	\[L(p,q)\setminus D_\rho(x) \hookrightarrow M_\tau \hookrightarrow (S^4,g)\]
	such that the induced metric is product near $\partial (L(p,q)\setminus D_\rho(x))$. Therefore, by taking the boundary of a small tubular neighborhood of $L(p,q)\setminus D_\rho(x)$ (see, e.g. \cite[Theorem 5.7]{GromovLawson80}) -- which is diffeomorphic to $M=L(p,q)\#-L(p,q)$ --  we obtain a smooth embedding of $M$ into $(S^4,g)$ such that the induced metric has positive scalar curvature.
\end{proof}

Proposition \ref{prop.psc.embedding.of.lens} motivates the following natural question.

\begin{quest}
	Construct a stable minimal embedding of $M=L(p,q)\# - L(p,q)$ into some PSC Riemannian $(S^4,g)$.
\end{quest}

The example constructed in Proposition \ref{prop.psc.embedding.of.lens} is minimal (in fact, the metric is a product nearby) everywhere except for the boundary of the tubular neighborhood of $\partial (L(p,q)\setminus D_\rho(x))$, where the mean curvature is large. With some trivial modifications of this construction, $M$ can be made everywhere mean convex.

\subsection{The Dahl-Larsson example}\label{subsection.DL.examples}

In \cite{DahlLarsson2019}, Dahl-Larsson constructed, for all $n\ge 3$, an asymptotically flat manifold $(X^n,g)$ with zero scalar curvature such that its apparent horizon is diffeomorphic to the unit normal bundle of a codimension $\ge 3$ submanifold $N$. We briefly discuss their discussion in the case we consider in our paper.

Let $\delta$ be the flat metric on $\RR^4$, $\gamma\subset \RR^4$ be an embedded curve.  Consider the Green's function 
\[G(x) = \int_\gamma |x-y|^{-2}dy\]
with poles along $\gamma$. It is well-known that $G$ satisfies the asymptotics
\[G(x)\sim \frac{A}{|x|_\delta^2}, ~~ |x|\to\infty; \quad G(x)\sim \frac{B}{\dist_\delta(x,\gamma)},~~x\to \gamma,\]
For some constants $A, B$. For $\eps>0$, consider the conformally deformed metric
\[g_\eps = (1+\eps G)^2 \delta. \]
The manifold $(\RR^4,g_\eps)$ is complete, noncompact and has two ends: $E_1$ near the infinity of $\RR^4$, and $E_2$ near $\gamma$. For each $\eps>0$, $g_\eps$ has zero scalar curvature since $\Delta_\delta u_\eps = 0$. $g_\eps$ on $E_1$ is asymptotically flat. $g_\delta$ on $E_2$ enjoys a similar asymptotic decay. To see this, let $s$ be the arclength coordinate of $\gamma$, $t$ the distance function to $\gamma$. Then locally we may write the metric $\delta = ds^2 + dt^2 + t^2 g_{s,t}$, where $g_{s,t}$ is a metric on the unit $S^2$ converging smoothly to the round metric as $s,t\to 0$. Setting $r=-\log t$, we see that
\begin{align*}
	g_\eps &= (1+\eps e^r)^2 (ds^2 + e^{-2r}dr^2 + e^{-2r} g_{s,r})\\
				& = (e^{-r}+\eps)^2 \left(e^{2r}ds^2 +dr^2 + g_{S^2}\right) + o(1).
\end{align*}
Therefore, $g_\eps$ is asymptotic to the product metric on $S^2\times \HH^2$ on $E_2$. In particular, near the infinity of $E_2$, it admits a mean convex foliation by $S^2\times S^1$.

\begin{theo}[{\cite[Theorem 1.1]{DahlLarsson2019}}]\label{theo.dahl.larsson}
	For sufficiently small $\eps>0$, it holds that the outermost apparent horizon $M_\eps$ of $(\RR^4,g_\eps)$ in $E_1$ is diffeomorphic to $S^2\times S^1$. In fact $M_\eps$ is the graph of a smooth function on the unit normal bundle in normal coordinates for $\gamma$.
\end{theo}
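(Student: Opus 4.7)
The plan is to construct $M_\eps$ perturbatively via the implicit function theorem as a normal graph over the unit normal bundle $U\gamma$, and then to verify outermostness through a mean-convex foliation argument.

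\textbf{Scaling and the model problem.} In Fermi coordinates $(s, t, \theta)$ near $\gamma$ (with $t = \dist_\delta(\cdot, \gamma)$ and $\theta \in S^2$), the mean curvature of the Euclidean tube $T_t$ in $g_\eps$ is given by the conformal transformation law
\[
H_{g_\eps}(T_t) = (1+\eps G)^{-1}\bigl(H_\delta(T_t) + 3\,\partial_t \log(1+\eps G)\bigr).
\]
Using the expansion $H_\delta(T_t) = 2/t + O(1)$ and the leading singularity $G \sim B/t$, rescaling $t = \eps\alpha$ gives
\[
H_{g_\eps}(T_t) = \frac{2\alpha - B}{\eps(\alpha + B)^2} + o(\eps^{-1}),
\]
so the minimal tube lies at $\alpha = B/2$, i.e., $t = \eps B/2$, to leading order. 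This identifies the natural scale for the surface and the correct model limit.

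\textbf{Existence via IFT and topology.} Parameterize candidate hypersurfaces as normal graphs $\{t = \eps f(s,\theta)\}$ over $U\gamma \cong \gamma \times S^2$, and write the minimal surface equation as $\cF(f, \eps) = 0$. The computation above gives $\cF(f_0, 0) = 0$ with $f_0 \equiv B/2$, and the linearization $L := D_f \cF(f_0, 0)$ is a self-adjoint elliptic operator on the compact manifold $\gamma \times S^2$. Once $\ker L = 0$ is established, the implicit function theorem in $C^{2,\alpha}(\gamma \times S^2)$ produces for each sufficiently small $\eps > 0$ a smooth solution $f_\eps$ close to $f_0$, yielding a smooth embedded minimal hypersurface $M_\eps$ which is a $C^1$-small graph over $U\gamma$. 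Since $\gamma$ is an embedded closed curve in the parallelizable manifold $\RR^4$, and $TS^1$ is trivial, the normal bundle $N\gamma$ is a trivial rank-$3$ vector bundle, so $U\gamma \cong S^1 \times S^2$, and hence $M_\eps \cong S^1 \times S^2$.

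\textbf{Outermostness.} I would construct a smooth global foliation of the region in $E_1$ on the far side of $M_\eps$ by hypersurfaces whose mean curvature vector points strictly toward $E_1$'s infinity: within a tubular neighborhood of $\gamma$ the leaves are the tubes $T_t$ for $t > \eps B/2$ (which have $\vec H$ in the $+\partial_t$ direction by the computation above), while in the asymptotically flat region the leaves are large coordinate spheres $\{|x| = R\}$ (mean convex in $g_\eps$ since $g_\eps \to \delta$ uniformly on compact subsets of $\RR^4 \setminus \gamma$ as $\eps \to 0$), interpolated smoothly in the intermediate annular region. The foliation parameter is then strictly subharmonic on any minimal hypersurface contained in this region (by the formula $\Delta_{M'} r = H_{\Sigma_r}^{+\nabla r}$ at a tangency point of $M'$ with a leaf), so a compact minimal $M'$ strictly beyond $M_\eps$ cannot attain an interior maximum of this parameter, contradicting compactness.

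\textbf{Main obstacle.} The most delicate step is verifying $\ker L = 0$. Decomposing by spherical harmonics on $S^2$ and Fourier modes on $\gamma$ reduces the problem to invertibility of a family of ODEs on $\gamma$, one for each spherical harmonic $\ell$. The modes $\ell = 0, 1$ are critical: they correspond respectively to uniform dilation of the model tube in the radial direction and to rigid normal translations of the tube in the three directions normal to $\gamma$, and for a straight infinite line $\gamma$ they would give genuine zero modes. For the compact curved $\gamma$ considered here, these near-symmetries are broken by the curvature of $\gamma$ and by subleading corrections in the asymptotic expansion of $G$ beyond the pure $B/t$ singularity; confirming strict invertibility uniformly in $\eps$ requires a careful quantitative spectral analysis of the Jacobi-type operator. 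The interpolating foliation in Step 3 also needs some care but is standard given the smallness of $\eps$.
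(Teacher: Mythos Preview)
This theorem is not proved in the paper at all; it is quoted from \cite{DahlLarsson2019}, and the paper only summarizes their method in one sentence: ``They constructed mean convex barriers and forced the apparent horizon in $E_1$ to lie between the tubular hypersurfaces around $\gamma$ of $\delta$-distance $C_{inner}\eps$ and $C_{outer}\eps$, provided that $\eps>0$ is sufficiently small.'' So the intended argument is a pure barrier/trapping argument: the outermost horizon exists by general theory, and two explicit mean-convex tubes pinch it into an $O(\eps)$-thin tubular shell around $\gamma$, after which the graphicality over $U\gamma\cong S^1\times S^2$ is immediate. Your route is genuinely different: you manufacture a specific minimal hypersurface by the implicit function theorem and then argue separately, via a global mean-convex foliation, that it is outermost. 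The barrier approach is more economical here because it works directly with the outermost object and never needs to analyze the Jacobi operator or build the interpolating foliation; your approach buys a more explicit description of $M_\eps$ but at the cost of the spectral step and a second global argument.

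On your ``main obstacle'': the claim that the $\ell=0$ and $\ell=1$ modes are zero modes for the straight-line model is not correct, and this misidentification is the weakest point of the proposal. Neither radial dilation of the tube nor rigid normal translation is an isometry of $g_\eps$, because the conformal factor $1+\eps G$ is anchored to $\gamma$. Your own scaling computation already shows this for $\ell=0$: at $\alpha=B/2$ one has $\partial_\alpha\big[(2\alpha-B)/(\alpha+B)^2\big]=8/(9B)\neq 0$, so the constant normal variation is not a Jacobi field. Similarly, an off-center tube about $\gamma$ is not minimal in $g_\eps$, so the $\ell=1$ translations do not produce Jacobi fields either. In fact, the monotonicity of $H_{g_\eps}(T_t)$ through zero that you computed indicates the model tube is strictly stable, so the linearization should be invertible without any delicate cancellation. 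Thus the IFT step is likely cleaner than you feared; the part that still needs real work in your outline is the construction of the interpolating mean-convex foliation between the $\eps$-scale tubes and the large coordinate spheres, which you leave as ``standard.'' The barrier method of Dahl--Larsson bypasses both issues simultaneously.
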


The proof of Theorem \ref{theo.dahl.larsson} is delicate. They constructed mean convex barriers and forced the apparent horizon in $E_1$ to lie between the tubular hypersurfaces around $\gamma$ of $\delta$-distance $C_{inner}\eps$ and $C_{outer}\eps$, provided that $\eps>0$ is sufficiently small.

We observe that the same proof in fact also proves the following simpler statement:

\begin{prop}
	For sufficiently small $\eps>0$, the outermost minimal hypersurface on $E_2$ is diffeomorphic to $S^2\times S^1$, and is the graph of a smooth function on the unit normal bundle in normal coordinates for $\gamma$.
\end{prop}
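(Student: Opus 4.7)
The plan is to carry out a localized version of the Dahl--Larsson barrier construction near $\gamma$, taking advantage of the fact that the asymptotic $\HH^2 \times S^2$-geometry of $(E_2, g_\eps)$ provides natural mean convex barriers near $\gamma$ without requiring control of the asymptotically flat region $E_1$. In tubular coordinates $(s, t, \omega)$ around $\gamma$, one has $\delta = F(s,t,\omega)\,ds^2 + dt^2 + t^2 g_{S^2}$ with $F \to 1$ as $t \to 0$, and $G = B t^{-1} + O(|\log t|)$. Applying the conformal change formula $H^{g_\eps} = u_\eps^{-1} H^\delta + 3 u_\eps^{-2} \partial_t u_\eps$ with $u_\eps = 1 + \eps G$ to $\Sigma_{t_0} := \{t = t_0\}$ yields
\[
H^{g_\eps}(\Sigma_{t_0}, \partial_t) = \frac{2 t_0 - \eps B}{(t_0 + \eps B)^2} + \text{lower order},
\]
so this mean curvature changes sign at $t_0 = \eps B/2 + O(\eps^2)$. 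For $0 < C_1 < B/2 < C_2$, both $\Sigma_{C_1\eps}$ and $\Sigma_{C_2\eps}$ are then mean convex boundary components of $U_\eps := \{C_1\eps < t < C_2\eps\}$; minimization of $\cH^3$ among integral $3$-currents in $\overline{U_\eps}$ in the homology class of $\Sigma_{C_1\eps}$ produces, by Schoen--Simon regularity, an embedded stable minimal hypersurface $M_\eps \subset U_\eps$.

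Next, I would confine every compact minimal hypersurface $N$ in the tubular neighborhood of $\gamma$ to an $O(\eps^2)$-thin band around $\Sigma_{\eps B/2}$. At a local maximum $q$ of $t|_N$, write $N$ locally as a graph $t = f$ over the tangent plane of $N$ at $q$; the minimal surface equation in $g_\eps$ reduces at $q$ (where $\nabla f = 0$) to $\Delta f(q) = H^{g_\eps}(\Sigma_{f(q)})$, so the max condition $\Delta f(q) \leq 0$ forces $H^{g_\eps}(\Sigma_{f(q)}) \leq 0$, hence $t_{\max}(N) \leq \eps B/2 + O(\eps^2)$. A symmetric argument at the minimum yields $t_{\min}(N) \geq \eps B/2 - O(\eps^2)$. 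After rescaling $t \mapsto t/\eps$, the rescaled metrics $\eps^{-2} g_\eps$ converge on compact tubular slabs to the rotationally symmetric model $(1 + B/\hat t)^2(d\hat t^2 + \hat t^2 g_{S^2})$ fibered over $\gamma$, whose only compact minimal hypersurfaces in a bounded $\hat t$-range are small perturbations of the slice $\{\hat t = B/2\}$. Combined with the band estimate and Schoen--Simon curvature estimates, elliptic regularity shows $N$ is, for small $\eps$, a smooth $C^{2,\alpha}$ graph over the unit normal bundle of $\gamma$, hence diffeomorphic to $\gamma \times S^2 \cong S^2 \times S^1$.

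Finally, to identify the outermost minimal hypersurface on $E_2$: since every compact minimal near $\gamma$ is a graph over the unit normal bundle lying in the $O(\eps^2)$-band around $\Sigma_{\eps B/2}$, the strong maximum principle implies any two distinct such graphs are strictly ordered (one lies entirely above the other in the $t$-direction). The outermost on $E_2$ is then the one with the smallest graph function; its existence follows from a mean-convex exhaustion using $\Sigma_{C_1\eps}$ with $C_1 \downarrow 0$ combined with the compactness of the family of stable minimals having a uniform area bound, and its graph structure is inherited from the above. The main technical obstacle is the rescaling and $C^{2,\alpha}$-convergence step, which is essentially the core of \cite{DahlLarsson2019}; here the analysis is strictly simpler because one does not need to match the horizon to far-field configurations in the asymptotic flat end $E_1$.
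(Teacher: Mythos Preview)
Your proposal is correct and follows essentially the same approach the paper intends: the paper gives no independent proof of this proposition, but simply asserts that ``the same proof'' as Dahl--Larsson's Theorem~1.1 applies, and your sketch is precisely a localized version of that barrier-and-rescaling argument near $\gamma$. In fact you supply considerably more detail than the paper does---your mean curvature computation $H^{g_\eps}(\Sigma_{t_0},\partial_t)=(2t_0-\eps B)/(t_0+\eps B)^2+\text{l.o.t.}$ correctly identifies the sign change at $t_0\approx\eps B/2$ and yields the two-sided barrier region, and your observation that the analysis on $E_2$ is strictly simpler (no matching to the asymptotically flat far field) is exactly the point the paper is making by calling this the ``simpler statement.''
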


The Dahl-Larsson example naturally leads to the following question.

\begin{quest}
	For each integer $l>1$, construct an asymptotically flat manifold  $(\RR^4\setminus \gamma, g)$ with $R_g\ge 0$ such that the outermost minimal hypersurface is diffeomorphic to $\#^l (S^2\times S^1)$.
\end{quest}


\bibliographystyle{amsplain}
\bibliography{bib.bib}

\end{document}